\newcommand{\mc}{\mathcal}
\newcommand{\mb}{\mathbb}
\newcommand{\tn}{\textnormal}
\newcommand{\exact}[3]{#1 \rightarrowtail #2 \twoheadrightarrow #3}
\newtheorem{thm}{Theorem}[section] 
\newtheorem{prop}[thm]{Proposition} 
\newtheorem{lem}[thm]{Lemma}
\newtheorem{cor}[thm]{Corollary} 
\newtheorem{thmdef}[thm]{Theorem / Definition}
\theoremstyle{definition}
\newtheorem{defn}[thm]{Definition}
\newtheorem*{theorem}{Theorem}
\newtheorem{ex}[thm]{Example}
\newtheorem*{ack}{Acknowledgements}
\begin{document}

\title[Fundamental theorems in algebraic $K$-theory]{Algebraic proofs of some fundamental theorems in algebraic $K$-theory} 


\author{Tom Harris}
\address{Mathematical Sciences, University of Southampton, SO17 1BJ, UK}
\email{tkmharris@gmail.com}
\urladdr{http://www.southampton.ac.uk/~tkh1v07/}
\thanks{}

\subjclass[2010]{19D99}

\keywords{higher algebraic $K$-groups, acyclic binary complexes, additivity theorem, 
resolution theorem, cofinality theorem.}

\date{November $20^{\tn{th}}$, 2013}

\dedicatory{}

\begin{abstract}
We present news proofs of the additivity, resolution and cofinality theorems for the algebraic
$K$-theory of exact categories. These proofs are entirely algebraic, based on Grayson's presentation
of higher algebraic $K$-groups via binary complexes.
\end{abstract}

\maketitle

\section*{Introduction}

The beautiful and relatively young discipline of algebraic $K$-theory has seen 
tremendous development and far-reaching applications in many other mathematical disciplines
over the last decades. This paper makes a contribution to a project (begun in \cite{Gray1} and \cite{Gray5})
reformulating its foundations.\\

The algebraic $K$-theory of an exact category was first described by Segal and Waldhausen, obtained
by modifying Segal's construction of the $K$-theory of a symmetric monoidal category. 
Quillen's alternative $Q$-construction gives a very powerful tool
for proving fundamental theorems in algebraic $K$-theory, which he exploited to prove
the additivity, resolution, d\'{e}vissage and localisation theorems \cite{Quil1}. 
Waldhausen's later work on the $S_{\cdot}$-construction, in particular 
his version of the additivity theorem, made simpler proofs of the theorems cited above and the cofinality theorem
possible \cite{Staffeldt}. Common in all of these approaches
is the use of some non-trivial content from homotopy theory. \\

Grayson \cite{Gray1} recently gave the first
presentation of the higher algebraic $K$-groups of an an exact category by generators and relations; 
we take this presentation as our definition of $K_{n}\mc{N}$. The object of this paper is to present 
new completely algebraic proofs of the additivity, resolution and cofinality
theorems in higher algebraic $K$-theory of exact categories. \\

We assume throughout that the reader is familiar with exact categories. They are first systematically
defined in \cite{Quil1}, a very nice exposition is \cite{Buh}.
In section \ref{background} we review the necessary details of Grayson's presentation and present 
a new proof of the additivity theorem, in its form concerning so-called extension categories.

\begin{theorem}[Additivity]
 Let $\mc{B}$ be an exact category, with exact subcategories $\mc{A}$ and $\mc{C}$ closed under 
 extensions in $\mc{B}$. Let $\mc{E}(\mc{A},\mc{B},\mc{C})$ denote the associated extension category. 
 Then $K_{n}\mc{E}(\mc{A},\mc{B},\mc{C}) \cong K_{n}\mc{A} \times K_{n}\mc{C}$, for every $n \geq 0$.
\end{theorem}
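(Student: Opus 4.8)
The plan is to construct mutually inverse maps between $K_n\mc{E}(\mc{A},\mc{B},\mc{C})$ and $K_n\mc{A}\times K_n\mc{C}$ induced by exact functors, and to verify one of the two composites by a single appeal to Grayson's presentation of $K_n$ as a quotient of the Grothendieck group of acyclic binary multicomplexes. First I would fix notation for the exact functors in play: the sub- and quotient functors $s\colon\mc{E}\to\mc{A}$ and $q\colon\mc{E}\to\mc{C}$, sending an object $\exact{A}{B}{C}$ to $A$ and to $C$; and the inclusions $i\colon\mc{A}\to\mc{E}$, $A\mapsto\exact{A}{A}{0}$, and $j\colon\mc{C}\to\mc{E}$, $C\mapsto\exact{0}{C}{C}$. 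All four are exact and hence induce homomorphisms on $K_n$. I propose $(s_*,q_*)$ as the isomorphism $K_n\mc{E}\to K_n\mc{A}\times K_n\mc{C}$, with inverse $(a,c)\mapsto i_*a+j_*c$.

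One composite is purely formal. Since $si=\mathrm{id}_{\mc{A}}$ and $qj=\mathrm{id}_{\mc{C}}$, while $sj$ and $qi$ are both the zero functor (which induces the zero map, as it sends every multicomplex to the zero multicomplex), functoriality gives $(s_*,q_*)\circ(i_*+j_*)=\mathrm{id}$ on $K_n\mc{A}\times K_n\mc{C}$. All the content therefore lies in the reverse composite, i.e.\ in proving the identity $[E]=i_*s_*[E]+j_*q_*[E]$ in $K_n\mc{E}$.

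For this I would use the natural short exact sequence of exact endofunctors of $\mc{E}$,
\[
 is \;\rightarrowtail\; \mathrm{id}_{\mc{E}} \;\twoheadrightarrow\; jq,
\]
whose component at an object $E=(\exact{A}{B}{C})$ has underlying objects $is(E)=\exact{A}{A}{0}$, $E$, and $jq(E)=\exact{0}{C}{C}$, and whose structure maps are built from the admissible mono and epi of $E$; this component is a conflation in $\mc{E}$ because it is one in each of the three columns $\mc{A},\mc{B},\mc{C}$. The key step is to push this through Grayson's presentation. Given an acyclic binary multicomplex $E_\bullet$ of dimension $n$ over $\mc{E}$, applying the three exact endofunctors objectwise yields a sequence $is(E_\bullet)\rightarrowtail E_\bullet\twoheadrightarrow jq(E_\bullet)$ of acyclic binary multicomplexes over $\mc{E}$ that is short exact in every multidegree, and all three terms are again acyclic because exact functors preserve conflations. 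Hence $[E_\bullet]=[is(E_\bullet)]+[jq(E_\bullet)]$ already in the Grothendieck group of acyclic binary multicomplexes of dimension $n$, and a fortiori in its quotient $K_n\mc{E}$. Because $i_*s_*[E_\bullet]=[is(E_\bullet)]$ and $j_*q_*[E_\bullet]=[jq(E_\bullet)]$, this is precisely the desired identity, and the two composites being identities completes the proof.

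The step that carries the real weight is the last one: verifying that exact functors, and more importantly an objectwise short exact sequence of exact functors, lift to genuine short exact sequences of acyclic binary multicomplexes compatible with the diagonal relations, so that all the induced maps are well defined on $K_n$. This is not conceptually deep --- it is exactly the mechanism by which Grayson's presentation replaces the homotopy-theoretic additivity theorem with Grothendieck-group bookkeeping --- but it is the one place where the binary-complex formalism must be handled with care, and essentially the only place where anything needs to be proved.
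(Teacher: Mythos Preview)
Your proof is correct and follows essentially the same idea as the paper's: both exploit the pair of exact functors $(s,q)\colon\mc{E}\to\mc{A}\times\mc{C}$ and $(A,C)\mapsto(\exact{A}{A\oplus C}{C})$ (equivalently $i_*+j_*$), and both use the tautological short exact sequence $is\rightarrowtail\mathrm{id}\twoheadrightarrow jq$ in $\mc{E}$, lifted to acyclic binary multicomplexes, to show the nontrivial composite is the identity. The only organizational difference is that the paper first cites the $K_0$ case, then invokes the identification $(B^{\tn{q}})^n\mc{E}(\mc{A},\mc{B},\mc{C})\cong\mc{E}((B^{\tn{q}})^n\mc{A},(B^{\tn{q}})^n\mc{B},(B^{\tn{q}})^n\mc{C})$ to transport the $K_0$ isomorphism to $K_0(B^{\tn{q}})^n$, and finally checks separately that the diagonal subgroups $T^n$ match so as to descend to the quotients; your version bypasses both of these steps by observing directly that the induced maps are already well defined on $K_n$ (exact functors preserve diagonals) and that the Grothendieck-group relation $[E_\bullet]=[is(E_\bullet)]+[jq(E_\bullet)]$ holds before passing to the quotient. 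This is a mild streamlining rather than a different argument.
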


Using Grayson's presentation, the proof of the additivity theorem is rather simple. 
In sections \ref{resolution} and \ref{cofinality} we present more involved proofs of the resolution and 
cofinality theorems.

\begin{theorem}[Resolution]
  Let $\mc{M}$ be an exact category and let $\mc{P}$ be a full, additive subcategory
 that is closed under extensions. Suppose also that:
\begin{enumerate}
 \item If $\exact{M'}{M}{M''}$ is an exact sequence in $\mc{M}$ with $M$ and $M''$ in $\mc{P}$ then
 $M'$ is in $\mc{P}$ as well.
 \item Given $j : M \twoheadrightarrow P$ in $\mc{M}$ with $P$ in $\mc{P}$ there exists 
 $j': P' \twoheadrightarrow P$ and $f : P' \rightarrow M$ in $\mc{M}$ with $P'$ in $\mc{P}$ such that 
 $jf = j'$.
 \item Every object of $\mc{M}$ has a finite resolution by objects of $\mc{P}$.
\end{enumerate}
 Then the inclusion functor $\mc{P} \hookrightarrow \mc{M}$ induces an isomorphism
 $K_{n}(\mc{P}) \cong K_{n}(\mc{M})$ for every $n \geq 0$.
\end{theorem}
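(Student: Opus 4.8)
The plan is to lift the statement to the level of acyclic binary multicomplexes and reduce it to an assertion about $K_{0}$, where the resolution hypotheses can be checked by hand. By Grayson's presentation, $K_{n}\mc{N}$ is the quotient of $K_{0}$ of the exact category of bounded acyclic binary $n$-multicomplexes in $\mc{N}$ --- write $(B^{q})^{n}\mc{N}$ for this category --- by the subgroup generated by those multicomplexes that are diagonal in at least one of the $n$ directions. The inclusion $\mc{P}\hookrightarrow\mc{M}$ induces, entrywise, an exact functor $(B^{q})^{n}\mc{P}\hookrightarrow(B^{q})^{n}\mc{M}$, so it is enough to show that this functor is an isomorphism on $K_{0}$ and that the isomorphism respects the two diagonal subgroups.

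First I would verify that the pair $(B^{q})^{n}\mc{P}\subseteq(B^{q})^{n}\mc{M}$ inherits the three hypotheses. Fullness and additivity are immediate from the entrywise definitions; closure under extensions and hypothesis (1) pass to multicomplexes because a short exact sequence of complexes in which two of the three terms are acyclic has its third term acyclic, applied together with the corresponding entrywise statement in $\mc{M}$. The real content is hypothesis (3): that every bounded acyclic binary $n$-multicomplex in $\mc{M}$ admits a finite resolution by acyclic binary $n$-multicomplexes lying in $\mc{P}$.

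Granting the three hypotheses for the multicomplex categories, I would invoke the elementary $K_{0}$ form of the resolution theorem --- which uses no homotopy theory, the inverse map sending a resolved object to the alternating sum of the classes appearing in any of its finite $\mc{P}$-resolutions --- to obtain $K_{0}((B^{q})^{n}\mc{P})\cong K_{0}((B^{q})^{n}\mc{M})$. It then remains to pass to the quotients by diagonals. The inclusion clearly sends a diagonal multicomplex to a diagonal one, so the only point to check is that the inverse (resolution) map carries the class of a diagonal multicomplex to a sum of classes of diagonals; this I would arrange by choosing, whenever the two differentials of a binary coordinate coincide, a resolution with $d=\widetilde{d}$, so that resolving a diagonal produces a diagonal. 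This compatibility lets the $K_{0}$-isomorphism descend to the claimed isomorphism $K_{n}\mc{P}\cong K_{n}\mc{M}$.

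The main obstacle is hypothesis (3) for multicomplexes: producing, for a given acyclic binary $n$-multicomplex $T$ in $\mc{M}$, a \emph{finite} resolution by multicomplexes in $\mc{P}$ that remains exact in every one of the $n$ directions and is compatible with both differentials of each binary coordinate. I expect to build it by induction on $n$ and on the homological size of $T$ through a bounded horseshoe-lemma argument: resolve in one coordinate using the object-level resolutions supplied by hypothesis (3), using the lifting hypothesis (2) and the closure hypothesis (1) to keep the resolving terms inside $\mc{P}$ and to force the resulting sequences to stay short exact, and then iterate over the remaining coordinates. Keeping the resolution finite and verifying that acyclicity is preserved at each stage --- which is precisely where hypotheses (1) and (2) are used --- will be the delicate parts of the argument.
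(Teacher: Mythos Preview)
Your overall schema is exactly the paper's: lift to $(B^{q})^{n}$, invoke the classical $K_{0}$ argument, and check that diagonals are preserved by the inverse map. Two points deserve attention.

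First, the paper does not verify the three hypotheses of the general Resolution theorem for $(B^{q})^{n}\mc{P}\subseteq(B^{q})^{n}\mc{M}$. Instead it first observes (following Quillen) that the general statement reduces \emph{formally} to the simpler Theorem~\ref{theorem 3}, whose hypotheses are: (1$'$) kernels of admissible epimorphisms from $\mc{P}$ stay in $\mc{P}$, and (2$'$) every object of $\mc{M}$ is the target of an admissible epimorphism from $\mc{P}$. Under these, every $M$ already has a \emph{length-one} resolution $\exact{P'}{P}{M}$, so the $K_{0}$ inverse is $[M]\mapsto[P]-[P']$ and one only needs to produce such short resolutions in $(B^{q})^{n}$. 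Your plan to lift hypothesis (3) directly and build finite resolutions of arbitrary length by an iterated horseshoe argument is doing more than necessary and will be correspondingly harder to control.

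Second, and more seriously, your sketch underestimates the main technical point. A chain map $P\to M$ in $B^{q}\mc{M}$ that is term-wise an admissible epimorphism in $\mc{M}$ need \emph{not} be an admissible epimorphism of acyclic complexes: its kernel is a bounded complex in $\mc{P}$, but there is no reason for that kernel to be acyclic in $\mc{P}$ (the paper gives an explicit counterexample in $\mathbf{Free}(R)$). The horseshoe lemma alone does not address this. The paper's Lemmas~\ref{diagonal res} and~\ref{arbitrary res} build the resolving complex $P$ very specifically---in the non-diagonal case using Grayson's ``doubled'' acyclic complexes with alternating $\left[\begin{smallmatrix}0&1\\0&0\end{smallmatrix}\right]$ and $\left[\begin{smallmatrix}0&0\\1&0\end{smallmatrix}\right]$ differentials---precisely so that one can verify, by hand, that the induced maps on the images of the differentials are admissible epimorphisms and hence that the kernel complex is acyclic in $\mc{P}$. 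This is the ``delicate part'' you allude to, but it is not something a generic induction will deliver; it requires an explicit construction.
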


\begin{theorem}[Cofinality]
 Let $\mc{M}$ be a cofinal exact subcategory of an exact category $\mc{N}$. Then the inclusion functor 
 $\mc{M} \hookrightarrow \mc{N}$ induces an injection $K_{0}\mc{M} \hookrightarrow K_{0}\mc{N}$ and isomorphisms
 $K_{n}\mc{M} \cong K_{n}\mc{N}$ for $n > 0$.
\end{theorem}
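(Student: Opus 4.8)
The plan is to reduce everything to Grayson's presentation reviewed above, which identifies $K_{n}\mc{N}$ with $K_{0}$ of the exact category $(B_{n}\mc{N})$ of acyclic binary multicomplexes of dimension $n$, modulo the subgroup $D_{n}(\mc{N})$ generated by the diagonal subcomplexes (those multicomplexes carrying two equal differentials in at least one of their $n$ binary directions). Thus I would study the inclusion-induced map $K_{0}(B_{n}\mc{M}) \to K_{0}(B_{n}\mc{N})$ together with its behaviour on these diagonal subgroups. The case $n = 0$ is special: here $(B_{0}\mc{N}) = \mc{N}$, there are no diagonal directions, and $K_{0}\mc{M} \to K_{0}\mc{N}$ is exactly the map whose injectivity is the classical $K_{0}$-cofinality statement, a direct computation with the monoid of isomorphism classes of $\mc{N}$ modulo that of $\mc{M}$, which cofinality forces to be a group. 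This yields the injection $K_{0}\mc{M} \hookrightarrow K_{0}\mc{N}$, and one does not expect surjectivity, matching the statement.

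For $n \ge 1$ the extra room provided by the diagonal directions is what upgrades the injection to an isomorphism. The first and principal step is to show that $(B_{n}\mc{M})$ is a cofinal exact subcategory of $(B_{n}\mc{N})$, and in the sharp form that every acyclic binary $n$-multicomplex $C$ in $\mc{N}$ admits a \emph{diagonal} complement landing in $\mc{M}$: a diagonal acyclic binary multicomplex $T$ in $\mc{N}$ with every entry of $C \oplus T$ in $\mc{M}$. Granting this, surjectivity of $K_{n}\mc{M} \to K_{n}\mc{N}$ is immediate, since $[T] \in D_{n}(\mc{N})$ forces $[C] \equiv [C \oplus T] \pmod{D_{n}(\mc{N})}$ while $C \oplus T$ lies in $(B_{n}\mc{M})$ and so represents a class in the image.

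To build $T$ I would use the disk complexes. Since $C$ is bounded, its nonzero entries occupy finitely many multidegrees; completing these one multidegree at a time by cofinality of $\mc{M}$ in $\mc{N}$, and assembling the completing objects into a direct sum of disks (each disk $Q \xrightarrow{\mathrm{id}} Q$, equipped with two equal differentials, being diagonal, so that their sum $T$ is diagonal and $C \oplus T$ is again acyclic in both differentials), produces a candidate complement. The one genuine subtlety is that, because $C$ and $T$ are acyclic, these membership conditions are over-determined — the alternating sums of entry classes along each binary direction must vanish — so that after satisfying the conditions along a sweep through the multidegrees a residual one holds only stably; one closes the gap by enlarging $T$ with a diagonal disk complex built from objects of $\mc{M}$, which is possible precisely because the cofinal quotient monoid is a group. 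This is the step where cofinality is essentially used, and it is exactly what has no analogue when $n = 0$.

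Finally, injectivity of $K_{n}\mc{M} \to K_{n}\mc{N}$ for $n \ge 1$. Because $(B_{n}\mc{M})$ is cofinal in $(B_{n}\mc{N})$, the $K_{0}$-cofinality statement gives that $K_{0}(B_{n}\mc{M}) \to K_{0}(B_{n}\mc{N})$ is injective, so it suffices to show that $D_{n}(\mc{M})$ is the full preimage of $D_{n}(\mc{N})$, i.e. that a diagonal class of $\mc{N}$ lying in the image of $K_{0}(B_{n}\mc{M})$ already comes from a diagonal class of $\mc{M}$. I would prove this by induction on $n$: a generator of $D_{n}(\mc{N})$ is the image under a diagonal functor of some $E \in (B_{n-1}\mc{N})$, and completing $E$ by a diagonal complement into $(B_{n-1}\mc{M})$ — the construction at level $n-1$ — rewrites that generator, modulo diagonals coming from $\mc{M}$, in terms of lower, doubly-diagonal data that the induction handles. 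I expect the main obstacle to be precisely this bookkeeping together with the over-determination subtlety above: making the passage from ``cofinal in $\mc{N}$'' to ``diagonally cofinal in every $(B_{n}\mc{N})$'' both correct and compatible with the diagonal subgroups, since acyclicity, the diagonal condition, and level-wise membership in $\mc{M}$ are three constraints that must be met simultaneously.
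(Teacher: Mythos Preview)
Your overall architecture matches the paper's: Grayson's presentation, the reduction to showing that $(B^{\tn{q}})^{n}\mc{M}$ is cofinal in $(B^{\tn{q}})^{n}\mc{N}$ with \emph{diagonal} complements, and the observation that surjectivity of $K_{n}\mc{M}\to K_{n}\mc{N}$ follows immediately from this. Two points, however, need sharpening.

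\textbf{The complement construction.} Your plan to ``complete one multidegree at a time'' targets the \emph{entries} $N_{k}$ of the complex. But membership of $N\oplus T$ in $(B^{\tn{q}})^{n}\mc{M}$ requires the \emph{images} of both differentials to lie in $(B^{\tn{q}})^{n-1}\mc{M}$, and adding a disk in $\mc{M}$ does not repair an image that has landed outside $\mc{M}$. The paper instead completes the images $C_{k}$ and $\widetilde{C}_{k}$ of the two differentials, using that $[C_{k}]=[\widetilde{C}_{k}]$ in $K_{0}(B^{\tn{q}})^{n-1}\mc{N}/K_{0}(B^{\tn{q}})^{n-1}\mc{M}$ (both equal the alternating sum of the entry classes) so that a \emph{single} $T_{k}$ simultaneously pushes both into $\mc{M}$; the entries then follow by closure under extensions. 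Your ``over-determination'' remark is pointing at exactly this issue, but the fix you describe does not resolve it.

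\textbf{Injectivity.} This is where there is a genuine gap. You propose to handle generators of $D_{n}(\mc{N})$ one at a time and invoke an induction on $n$, but an individual diagonal class $[T]$ need not lie in $K_{0}(B^{\tn{q}})^{n}\mc{M}$ at all; only their signed sum $x$ does, and your argument never brings that hypothesis to bear. The paper's route is different and rests on a trick you do not mention. First (Proposition~\ref{diagonal represent}) one uses the doubly-diagonal form of the cofinality lemma to rewrite $x$ modulo $T^{n}_{\mc{M}}$ as the class of a \emph{single} diagonal multicomplex $t$, diagonal in a fixed direction $i$. Now the hypothesis $[t]\in K_{0}(B^{\tn{q}})^{n}\mc{M}$ translates, via the relative description of the cokernel, into an isomorphism $t\oplus a_{1}\cong a_{2}$ with $a_{1},a_{2}$ in $(B^{\tn{q}})^{n}\mc{M}$. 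One then applies the exact endofunctor $\Delta_{i}\top_{i}$: it fixes $t$ (already diagonal in direction $i$) and sends $a_{1},a_{2}$ to diagonal objects of $(B^{\tn{q}})^{n}\mc{M}$, whence $[t]=[\Delta_{i}\top_{i}(a_{2})]-[\Delta_{i}\top_{i}(a_{1})]\in T^{n}_{\mc{M}}$. This functorial move is the missing ingredient; without it, or something playing the same role, your inductive sketch does not close.
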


Grayson defines the $n^{\tn{th}}$
algebraic $K$-group of an exact category, denoted $K_{n}\mc{N}$, as a quotient group of the Grothendieck 
group of a certain related exact category $(B^{\tn{q}})^{n}\mc{N}$, whose objects are so-called 
\textit{acyclic binary multicomplexes} (see Definition\autoref{binary-multi}). 
Each of the theorems above makes some comparison between the $K$-groups of a pair of exact categories. 
These theorems all have well-known algebraic folk proofs for the Grothendieck group $K_{0}$ so
the general schema for our proofs is then as follows. First we verify that
the hypotheses on our exact categories of interest also hold for their associated categories of 
acyclic binary multicomplexes. Then we apply the algebraic $K_{0}$ proof to obtain a comparison between their 
Grothendieck groups. Finally we verify that the required comparison still holds when we pass to the quotients 
defining the higher algebraic $K$-groups.\\

The remaining theorems regarded as fundamental in the algebraic $K$-theory of exact categories are the 
d\'{e}vissage and localisation theorems. These theorems
concern abelian categories, say $\mc{A}$ and $\mc{B}$. While the associated categories 
$(B^{\tn{q}})^{n}\mc{A}$ and $(B^{\tn{q}})^{n}\mc{B}$ are still exact, they will no longer be 
abelian, so a strategy more sophisticated than the approach of this paper will be necessary to prove
these theorems in the context of Grayson's new definition of the higher algebraic $K$-groups.

\begin{ack} 
 The author thanks Daniel Grayson for an essential insight into the proof of the resolution theorem,
 and for the results of his paper \cite{Gray1}, which we draw upon extensively. Thanks are also due
 for his helpful comments on a late draft of this paper.\\
 The author also thanks his supervisor Bernhard K\"{o}ck for his careful 
 readings of successive drafts of this work, for suggesting numerous improvements to its content, 
 and for his encouragement and enthusiasm. 
\end{ack}

\section{Grayson's binary complex algebraic $K$-theory}
\label{background}

In this section we recall the definitions and main result of \cite{Gray1}. 
As a first application we give a simple new proof of the additivity theorem, as
previously proven in \cite{Quil1}, as well as \cite{McCarthy} and \cite{Gray4},
and whose version for the $S_{\cdot}$-construction is commonly considered to be
\textit{the} fundamental theorem in the algebraic $K$-theory of spaces. We shall work throughout 
with exact categories in the sense of Quillen \cite{Quil1}, that is, 
additive categories with a distinguished collection of short exact sequences that satisfies
a certain set of axioms.

\begin{defn}
\label{acyclic}
 A bounded \textit{acyclic complex}, or \textit{long exact sequence}, in an exact category $\mc{N}$ is a
 bounded chain complex $N$ whose differentials factor through short exact sequences of $\mc{N}$. That is, 
 the differentials $d_{k} : N_{k} \rightarrow  N_{k-1}$ factor as 
 $N_{k} \twoheadrightarrow Z_{k-1} \rightarrowtail N_{k-1}$ such that each
 $\exact{Z_{k}}{N_{k}}{Z_{k-1}}$ is a short exact sequence of $\mc{N}$.
\end{defn}

In an abelian category the long exact sequences defined above agree with the usual long exact sequences.
Care must be taken in the case of a general exact category, as the following example shows. 

\begin{ex}
\label{proj and free}
 Let $R$ be a ring with a finitely-generated, 
stably-free, non-free projective module $P$ (so $P \oplus R^{m} \cong R^{n}$ for some $m$ and $n$).
We have short exact sequences of $R$-modules
$0 \rightarrow P \stackrel{i}{\rightarrow} R^{n} \stackrel{p}{\rightarrow} R^{m} \rightarrow 0$
and $0 \rightarrow R^{m} \stackrel{j}{\rightarrow} R^{n} \stackrel{q}{\rightarrow} P \rightarrow 0$,
where $i,j,p$ and $q$ are the obvious inclusions and projections.
The sequence 
\[ 0 \rightarrow R^{m} \stackrel{j}{\rightarrow} R^{n} \stackrel{iq}{\rightarrow} 
R^{n} \stackrel{p}{\rightarrow} R^{n} \rightarrow 0 \]
is a chain complex in the exact category of finitely-generated free modules, $\mathbf{Free}(R)$, 
and it is exact as a sequence of 
$R$ modules, but, in the sense of the definition above, it is not long exact in $\mathbf{Free}(R)$.
\end{ex}

The category $C^{\tn{q}}\mc{N}$
\footnote{The $\tn{q}$ stands for ``quasi-isomorphic to the zero complex''. 
We will not make use of the notion of a quasi-isomorphism---the
$\tn{q}$ is a reminder that we are dealing only with the \textit{acyclic} complexes in $\mc{N}$.} 
of bounded acyclic complexes in an exact category $\mc{N}$ is itself an exact category (\cite{Gray1}, $\S 6$); 
a composable pair of chain maps 
is declared to be short exact if and only if each composable pair of term-wise morphisms is short exact in $\mc{N}$.
That is, a composition of chain maps 
$N' \stackrel{\phi}{\rightarrow} N \stackrel{\psi}{\rightarrow} N''$
is short exact if and only if every 
$N'_{k} \stackrel{\phi_{k}}{\rightarrow} N_{k} \stackrel{\psi_{k}}{\rightarrow} N''_{k}$
is short exact. A word of warning here: a morphism in $C^{\tn{q}}\mc{N}$ that has admissible epimorphisms of $\mc{N}$
as its term-wise morphisms is not necessarily an admissible epimorphism in $C^{\tn{q}}\mc{N}$. 

\begin{ex}
\label{complex-epi}
 Let $i$, $j$, $p$ and $q$ 
be the morphisms in Example\autoref{proj and free}, and note that $iq + jp = 1$.
The diagram below has exact rows, and is in fact a morphism of 
$C^{\tn{q}}\mathbf{Free}(R)$.
\[
 \xymatrix{
 R^{m} \ar@{ >->}[rr]^{\left[\begin{smallmatrix} j \\ 1 \end{smallmatrix}\right]} \ar@{->>}[d] & &
 R^{n} \oplus R^{m} \ar[rr]^{\left[\begin{smallmatrix} 1 & -j \\ -p & 1 \end{smallmatrix}\right]} 
 \ar@{->>}[d]_{\left[\begin{smallmatrix} -p & 1 \end{smallmatrix}\right]} & &
 R^{n} \oplus R^{m} \ar@{->>}[rr]^{\left[\begin{smallmatrix} p & 1 \end{smallmatrix}\right]} 
 \ar@{->>}[d]^{\left[\begin{smallmatrix} 0 & 1 \end{smallmatrix}\right]} & &
 R^{m} \ar@{->>}[d] \\
 0 \ar@{ >->}[rr] & & R^{m} \ar[rr]^{1} & & R^{m} \ar@{->>}[rr] & & 0.
 }
\]
Each vertical arrow is an admissible epimorphism of $\mathbf{Free}(R)$, but the diagram is not an admissible
epimorphism of $C^{\tn{q}}\mathbf{Free}(R)$---its kernel is the complex discussed in Example\autoref{proj and free}, which
is not acyclic in $\mathbf{Free}(R)$.
\end{ex}

\begin{defn}
\label{binary}
 A \textit{binary complex} in $\mc{N}$ is a chain complex with two independent differentials. 
 More precisely, a binary complex is a triple $(N,d,\tilde{d})$ such that $(N,d)$ and $(N, \tilde{d})$
 are chain complexes in $\mc{N}$. We call a binary chain complex \textit{acyclic} if each of the complexes
 $(N,d)$ and $(N, \tilde{d})$ is acyclic in $\mc{N}$. A morphism between binary complexes is a morphism 
 between the underlying graded objects that commutes with both differentials. A short exact sequence
 is a composable pair of such morphisms that is short exact term-wise.
\end{defn}

Since $C^{\tn{q}}\mc{N}$ is an exact category, the reader may easily check that the category $B^{\tn{q}}\mc{N}$ 
of bounded acyclic binary complexes in $\mc{N}$ is also an exact category. There is a diagonal functor 
$\Delta : C^{\tn{q}}\mc{N} \rightarrow B^{\tn{q}}\mc{N}$, sending $(N,d)$ to $(N,d,d)$.
A binary complex that is in the image of $\Delta$ is also called \textit{diagonal}.
The diagonal functor is split by the \textit{top} and \textit{bottom} functors 
$\top , \bot : B^{\tn{q}}\mc{N} \rightarrow C^{\tn{q}}\mc{N}$;
it is clear that $\Delta, \top$ and $\bot$ are all exact. \\

Taking the category of acyclic binary complexes behaves well with respect to subcategories
closed under extensions. If $\mc{M}$ is a full subcategory closed 
under extension in $\mc{N}$ (later just called an \textit{exact subcategory}), 
then $B^{\tn{q}}\mc{M}$ is a subcategory closed under extensions in $B^{\tn{q}}\mc{N}$.
It is important here that the binary complexes in $B^{\tn{q}}\mc{N}$ are bounded. Starting at the
final non-zero term, one argues by induction that the objects that the extension factors through 
are actually in $\mc{M}$, using the hypothesis on $\mc{M}$ and $\mc{N}$, and the $3 \times 3$ Lemma 
(\cite{Buh}, Corollary 3.6).\\

Since $B^{\tn{q}}\mc{N}$ is an exact category, we can iteratively define an exact category
$(B^{\tn{q}})^{n}\mc{N} = B^{\tn{q}}B^{\tn{q}} \cdots B^{\tn{q}}\mc{N}$ for each $n \geq 0$. 
The objects of this category are bounded acyclic binary complexes of bounded acyclic binary complexes ...
of objects of $\mc{N}$. Happily, this may be neatly unwrapped: it is obvious that the following is an equivalent
definition of $(B^{\tn{q}})^{n}\mc{N}$.

\begin{defn}
\label{binary-multi}
 The exact category $B^{\tn{q}}\mc{N}$ of bounded acyclic binary multicomplexes of dimension 
 $n$ in $\mc{N}$ is defined as follows. A \textit{bounded acyclic binary multicomplex of dimension $n$}
 is a bounded (i.e. only finitely many non-zero), $\mb{Z}^{n}$-graded collection of objects of 
 $\mc{N}$ together with a pair of acyclic differentials $d^{i}$ and $\tilde{d}^{i}$ in each direction 
 $1 \leq i \leq n$ such that, for $i \neq j$, 
 \[
 \begin{array}{rcl}
 d^{i}d^{j} &=& d^{j}d^{i} \\ 
 d^{i}\tilde{d}^{j} &=& \tilde{d}^{j}d^{i} \\ 
 \tilde{d}^{i}d^{j} &=& d^{j}\tilde{d}^{i} \\
 \tilde{d}^{i}\tilde{d}^{j} &=& \tilde{d}^{j}\tilde{d}^{i}.
 \end{array}
 \]
 In other words, any pair of differentials in different directions commute. A morphism 
 $\phi : N \rightarrow N'$ between such binary multicomplexes is a $\mb{Z}^{n}$-graded collection of 
 morphisms of $\mc{N}$ that commutes with all of the differentials of $N$ and $N'$. A short exact sequence 
 in $(B^{\tn{q}})^{n}\mc{N}$ is a composable pair of such morphisms that is short exact term-wise.
\end{defn}

In addition to $(B^{\tn{q}})^{n}\mc{N}$, for each $n \geq 1$ we have an exact category 
$C^{\tn{q}}(B^{\tn{q}})^{n-1}\mc{N}$ of bounded acyclic complexes of objects of $(B^{\tn{q}})^{n-1}\mc{N}$. 
For each $i$ with $1 \leq i \leq n$ there is a diagonal functor 
$\Delta_{i} : C^{\tn{q}}(B^{\tn{q}})^{n-1}\mc{N} \rightarrow (B^{\tn{q}})^{n}\mc{N}$ that consists of
`doubling up' the differential of the (non-binary) acyclic complex and regarding it as direction $j$
in the resulting acyclic binary multicomplex. Any object of $(B^{\tn{q}})^{n}\mc{N}$ that is in the image 
of one of these $\Delta_{i}$ is called \textit{diagonal}. The diagonal
binary multicomplexes are those that have $d^{i} = \tilde{d}^{i}$ for \textit{at least one} $i$.\\

We can now formulate Grayson's presentation of the algebraic $K$-theory groups of $\mc{N}$,
which we shall take to be their definition for the remainder of this paper.

\begin{thmdef}[\cite{Gray1}, Corollary 7.4]
\label{Grayson's def}
 For $\mc{N}$ an exact category and $n \geq 0$, the abelian group $K_{n}\mc{N}$ is presented as follows.
 There is one generator for each bounded acyclic binary multicomplex of dimension $n$, and there are relations
 $[N'] + [N''] = [N]$ if there is a short exact sequence $\exact{N' }{N}{N''}$ in $(B^{\tn{q}})^{n}\mc{N}$, 
 and $[T] = 0$ if $T$ is a diagonal acyclic binary multicomplex.
\end{thmdef}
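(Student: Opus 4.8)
The statement is Grayson's theorem (\cite{Gray1}, Corollary 7.4), and the plan is to reconstruct its proof by comparing the presented group with the $K$-theory produced by Waldhausen's $S_{\cdot}$-construction, for which $K_{n}\mc{N} = \pi_{n}K(\mc{N})$ with $K(\mc{N})$ the (connective) $K$-theory spectrum. First I would reformulate the presented group. The generators and the short-exact-sequence relations are exactly the defining data of the Grothendieck group $K_{0}((B^{\tn{q}})^{n}\mc{N})$, while the relation $[T]=0$ for diagonal $T$ kills the subgroup generated by the images of the diagonal functors $\Delta_{i} : C^{\tn{q}}(B^{\tn{q}})^{n-1}\mc{N} \rightarrow (B^{\tn{q}})^{n}\mc{N}$. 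Writing $\kt_{0}((B^{\tn{q}})^{n}\mc{N})$ for this quotient (the \emph{reduced} Grothendieck group), the theorem becomes the assertion $\kt_{0}((B^{\tn{q}})^{n}\mc{N}) \cong K_{n}\mc{N}$.

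The core of the argument is a single-step delooping, natural in the exact category $\mc{M}$. The diagonal $\Delta : C^{\tn{q}}\mc{M} \rightarrow B^{\tn{q}}\mc{M}$ is split by $\top$ and $\bot$, so $K(C^{\tn{q}}\mc{M})$ is a retract of $K(B^{\tn{q}}\mc{M})$, and I would prove that the complementary summand --- the reduced part $\kt(B^{\tn{q}}\mc{M})$, i.e.\ the cofiber of $\Delta_{*}$ --- is naturally equivalent to the loop space $\Omega K(\mc{M})$, giving $\kt_{i}(B^{\tn{q}}\mc{M}) \cong K_{i+1}(\mc{M})$ for all $i \geq 0$ (the case $i=0$, $\mc{M}=\mc{N}$ being Nenashev's presentation of $K_{1}$). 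The geometric content is that an acyclic complex behaves like a nullhomotopy: its class is forced to vanish by the additivity relation $\sum (-1)^{k}[N_{k}]$, so $C^{\tn{q}}\mc{M}$ supplies the cone direction, while a \emph{binary} acyclic complex is a pair of such contractions on the same underlying object, i.e.\ a loop, with $\Delta$ encoding the constant loops. Applying this equivalence in each of the $n$ binary directions in turn, the diagonal relations of the presentation are precisely what strip off the retract summands $K(C^{\tn{q}}-)$ at every stage, so that the fully reduced Grothendieck group is the $\pi_{0}$ of $\Omega^{n}K(\mc{N})$, namely $K_{n}\mc{N}$. I would make the single-step equivalence precise using Waldhausen's additivity theorem together with the $S_{\cdot}$-construction (see \cite{Staffeldt}), realising the reduced $B^{\tn{q}}$ as an explicit delooping.

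The main obstacle is exactly this single-step equivalence: it is where all the genuine homotopy theory lives --- additivity, group completion, and the simplicial delooping --- and it is not formal. By contrast, the reduction to $\pi_{0}$ is routine: the Grothendieck group records precisely the generators (objects) and the extension relations, and the reduction records the diagonal relations, so passing to $\pi_{0}$ of the comparison equivalence yields exactly the asserted presentation, and the induction over the $n$ directions is bookkeeping. A secondary technical point is that the complexes are \emph{bounded}, so no Eilenberg-swindle shortcut is available and the identification of the looping must be argued directly, much as the closure of $B^{\tn{q}}\mc{M}$ under extensions was argued earlier by a bounded induction using the $3 \times 3$-Lemma.
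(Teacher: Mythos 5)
You cannot be checked against an internal proof here, because the paper does not contain one: this statement is quoted from \cite{Gray1} (Corollary 7.4) and adopted as the \emph{definition} of $K_{n}\mc{N}$ for the remainder of the paper --- indeed the entire point of the paper is to take this presentation as a black box and deduce additivity, resolution and cofinality from it by purely algebraic means. So the only meaningful comparison is with Grayson's own argument, and against that your outline is faithful in shape: Grayson does reduce everything to a one-step identification, natural in the exact category $\mc{M}$, of the $\Delta$-reduced $K$-theory of $B^{\tn{q}}\mc{M}$ with a delooping of $K(\mc{M})$, proved with genuine homotopy theory, and then iterates over the $n$ directions and passes to $\pi_{0}$; your observation that the generators and exact-sequence relations alone give $K_{0}(B^{\tn{q}})^{n}\mc{N}$ while the diagonal relations cut out the subgroup $T^{n}_{\mc{N}}$ agrees with the paper's own reformulation just after the statement. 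One inaccuracy of attribution: Grayson's engine is not a bare appeal to the $S_{\cdot}$-construction, but the Gillet--Grayson $G_{\cdot}$-construction, a model for $\Omega|S_{\cdot}\mc{M}|$ whose simplices are pairs of compatibly filtered objects with identified quotients --- this is precisely where your ``pair of contractions is a loop, $\Delta$ gives the constant loops'' heuristic is made rigorous --- used in combination with the additivity theorem.

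As a proof, however, the proposal has a genuine gap, which you yourself flag: the single-step equivalence $\kt(B^{\tn{q}}\mc{M}) \simeq \Omega K(\mc{M})$ \emph{is} the theorem, and you give no argument for either half --- neither surjectivity (every loop is represented by a bounded acyclic binary complex) nor injectivity (the only relations are extensions and vanishing of diagonals); these occupy the bulk of Grayson's paper and cannot be outsourced to ``additivity plus simplicial delooping'' in one line. Two secondary points also need care. First, the case $i=0$ is not literally Nenashev's presentation of $K_{1}$: Nenashev uses only double short exact sequences (binary acyclic complexes supported in three degrees) with different relations, and comparing his presentation with the one above is itself a nontrivial task, so it should be cited as motivation rather than as the base case. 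Second, ``the induction over the $n$ directions is bookkeeping'' is true only because each $\Delta_{i}$ is split (by $\top_{i}$ and $\bot_{i}$), so that each cofibre sequence splits and $\pi_{0}$ of the iterated cofibre is the quotient by the subgroup generated by the diagonal classes in \emph{all} directions; verifying that the splittings in the several directions are compatible, so that one really obtains the quotient by $T^{n}_{\mc{N}}$ rather than by a single direction's diagonals, is a check that must actually be performed, not a tautology.
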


Observe that if we disregard the second relation that diagonal binary multicomplexes vanish, then
we obtain the Grothendieck group $K_{0}(B^{\tn{q}})^{n}\mc{N}$.
Another way to say this is that $K_{n}\mc{N}$ is a quotient group of the Grothendieck group
of the exact category $(B^{\tn{q}})^{n}\mc{N}$.
Denote by $T^{n}_{\mc{N}}$ the subgroup of $K_{0}(B^{\tn{q}})^{n}\mc{N}$ generated
by the classes of the diagonal binary multicomplexes in $(B^{\tn{q}})^{n}\mc{N}$.
Then we may write $K_{n}\mc{N} \cong (K_{0}(B^{\tn{q}})^{n}\mc{N}) / T^{n}_{\mc{N}}$. \\

We now present a new, elementary proof of the additivity theorem. 
Let $\mc{A}$ and $\mc{C}$ be exact subcategories of an exact category $\mc{B}$. 

\begin{defn}
\label{extension category}
 The \textit{extension category} $\mc{E}(\mc{A},\mc{B},\mc{C})$ 
 is the exact category whose objects are short exact sequences
 $\exact{A}{B}{C}$ in $\mc{B}$, with $A$ in $\mc{A}$ and $C$ in $\mc{C}$, and 
 whose morphisms are commuting rectangles.
\end{defn}

\begin{thm}[Additivity]
\label{additivity}
 $K_{n}\mc{E}(\mc{A},\mc{B},\mc{C}) \cong K_{n}\mc{A} \times K_{n}\mc{C}$, for every $n \geq 0$.
\end{thm}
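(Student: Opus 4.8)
The plan is to run the three-step schema advertised in the introduction, using Grayson's presentation $K_{n}\mc{N} \cong (K_{0}(B^{\tn{q}})^{n}\mc{N})/T^{n}_{\mc{N}}$. Thus everything reduces to understanding the Grothendieck group of the exact category $(B^{\tn{q}})^{n}\mc{E}(\mc{A},\mc{B},\mc{C})$ together with its subgroup $T^{n}_{\mc{E}}$ of diagonal classes, and showing that the desired isomorphism holds already at the level of these $K_{0}$-groups and then survives the passage to the quotient.

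The first and central step is the structural identification
\[
(B^{\tn{q}})^{n}\mc{E}(\mc{A},\mc{B},\mc{C}) \cong \mc{E}\bigl((B^{\tn{q}})^{n}\mc{A},(B^{\tn{q}})^{n}\mc{B},(B^{\tn{q}})^{n}\mc{C}\bigr)
\]
as exact categories. By the remark preceding Definition~\ref{binary-multi}, applied inductively, $(B^{\tn{q}})^{n}\mc{A}$ and $(B^{\tn{q}})^{n}\mc{C}$ are closed under extensions in $(B^{\tn{q}})^{n}\mc{B}$, so the right-hand extension category is defined. An object on the left is a bounded acyclic binary multicomplex each of whose entries is a short exact sequence $\exact{A_{k}}{B_{k}}{C_{k}}$ and whose differentials are morphisms of $\mc{E}$, i.e.\ commuting rectangles; an object on the right is a term-wise short exact sequence $\exact{N_{\mc{A}}}{N_{\mc{B}}}{N_{\mc{C}}}$ of acyclic binary multicomplexes. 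These carry exactly the same data once one checks that the three component multicomplexes are all acyclic. The hard part will be this acyclicity correspondence: I would prove, by induction on the number of non-zero terms and using the $3\times 3$ lemma precisely as in the excerpt's argument that $B^{\tn{q}}\mc{M}$ is extension-closed, that a bounded complex in $\mc{E}$ is acyclic if and only if its $\mc{A}$- and $\mc{C}$-components are acyclic (the $\mc{B}$-component being then automatic, with all cycle objects lying in $\mc{E}$, by closure of $\mc{A}$ and $\mc{C}$ under extensions). Applying this in each of the $n$ directions for both differentials yields the identification.

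Next I would record the folklore $K_{0}$-additivity for an arbitrary extension category $\mc{E}(\mc{X},\mc{Y},\mc{Z})$: the sub- and quotient functors $s,q$ induce $(s_{*},q_{*}) : K_{0}\mc{E}(\mc{X},\mc{Y},\mc{Z}) \to K_{0}\mc{X}\times K_{0}\mc{Z}$, with inverse $([X],[Z]) \mapsto [\exact{X}{X\oplus Z}{Z}]$. That these are mutually inverse is the standard computation: every object $\exact{X}{Y}{Z}$ sits in the canonical short exact sequence of $\mc{E}$ with sub-object $\exact{X}{X}{0}$ and quotient $\exact{0}{Z}{Z}$, forcing $[\exact{X}{Y}{Z}] = [\exact{X}{X\oplus Z}{Z}]$ in $K_{0}\mc{E}$. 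Specialising $\mc{X}=(B^{\tn{q}})^{n}\mc{A}$, $\mc{Y}=(B^{\tn{q}})^{n}\mc{B}$, $\mc{Z}=(B^{\tn{q}})^{n}\mc{C}$ and feeding in the identification of the previous step produces an isomorphism
\[
\Phi := (s_{*},q_{*}) : K_{0}(B^{\tn{q}})^{n}\mc{E}(\mc{A},\mc{B},\mc{C}) \xrightarrow{\ \cong\ } K_{0}(B^{\tn{q}})^{n}\mc{A} \times K_{0}(B^{\tn{q}})^{n}\mc{C}.
\]

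Finally I would verify that $\Phi$ descends to the quotients by the diagonal subgroups, i.e.\ that $\Phi(T^{n}_{\mc{E}}) = T^{n}_{\mc{A}}\times T^{n}_{\mc{C}}$. For the inclusion $\subseteq$, the functors $s$ and $q$ are exact and act term-wise, hence send any diagonal binary multicomplex (one with $d^{i}=\tilde{d}^{i}$ for some $i$) to a diagonal binary multicomplex, so $\Phi$ carries the generators of $T^{n}_{\mc{E}}$ into $T^{n}_{\mc{A}}\times T^{n}_{\mc{C}}$. For the reverse inclusion I use the explicit inverse: a diagonal $T_{\mc{A}}$ in $(B^{\tn{q}})^{n}\mc{A}$ is sent to $\exact{T_{\mc{A}}}{T_{\mc{A}}}{0}$, which is diagonal in the same direction inside $(B^{\tn{q}})^{n}\mc{E}(\mc{A},\mc{B},\mc{C})$, and likewise a diagonal $T_{\mc{C}}$ goes to $\exact{0}{T_{\mc{C}}}{T_{\mc{C}}}$; hence $\Phi^{-1}(T^{n}_{\mc{A}}\times T^{n}_{\mc{C}})\subseteq T^{n}_{\mc{E}}$. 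Combining the two inclusions gives equality, so $\Phi$ induces an isomorphism on quotients, which after the identification $(G\times H)/(G'\times H')\cong (G/G')\times(H/H')$ is exactly $K_{n}\mc{E}(\mc{A},\mc{B},\mc{C}) \cong K_{n}\mc{A}\times K_{n}\mc{C}$. I expect the acyclicity correspondence of the second paragraph to be the only genuinely delicate point.
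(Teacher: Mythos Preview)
Your proposal is correct and follows essentially the same approach as the paper: identify $(B^{\tn{q}})^{n}\mc{E}(\mc{A},\mc{B},\mc{C})$ with the extension category $\mc{E}\bigl((B^{\tn{q}})^{n}\mc{A},(B^{\tn{q}})^{n}\mc{B},(B^{\tn{q}})^{n}\mc{C}\bigr)$, apply $K_{0}$-additivity, and then check that the isomorphism matches the diagonal subgroups. The only difference is emphasis: the paper asserts the category identification in a single sentence (``a short exact sequence of binary multicomplexes is exactly the same thing as a binary multicomplex of short exact sequences''), while you correctly isolate the acyclicity correspondence as the point requiring the $3\times 3$ lemma---but this is a matter of how much detail is written out, not a different argument.
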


\begin{proof}
 The exact functors 
 \[
  \begin{array}{rcl}
  \mc{E}(\mc{A},\mc{B},\mc{C}) &\rightarrow& \mc{A} \times \mc{C} \\
  (\exact{A}{B}{C}) &\mapsto& (A,C) 
  \end{array}
 \]
 and
 \[
  \begin{array}{rcl}
  \mc{A} \times \mc{C} &\rightarrow& \mc{E}(\mc{A},\mc{B},\mc{C})\\
  (A,C) &\mapsto& (\exact{A}{A \oplus C}{C})
  \end{array}
 \]
 induce mutually inverse isomorphisms between $K_{0}\mc{E}(\mc{A},\mc{B},\mc{C})$ and 
 $K_{0}\mc{A} \times K_{0}\mc{C}$, (see, e.g., \cite{WeiK}, II.9.3).
 For each $n > 0$, the categories $(B^{\tn{q}})^{n}\mc{A}$ and $(B^{\tn{q}})^{	n}\mc{C}$ are exact 
 subcategories of $(B^{\tn{q}})^{n}\mc{B}$, so we can define an $n^{\tn{th}}$ extension category
 \[ \mc{E}^{n}(\mc{A},\mc{B},\mc{C}) := 
 \mc{E}((B^{\tn{q}})^{n}\mc{A},(B^{\tn{q}})^{n}\mc{B},(B^{\tn{q}})^{n}\mc{C}). \]
 From the above, for each $n$ the induced map
 \[ K_{0}\mc{E}^{n}(\mc{A},\mc{B},\mc{C})
 \rightarrow K_{0}(B^{\tn{q}})^{n}\mc{A} \times K_{0}(B^{\tn{q}})^{n}\mc{C} \]
 is an isomorphism. But a short exact sequence of binary multicomplexes is exactly the same thing as
 a binary multicomplex of short exact sequences, so the categories $\mc{E}^{n}(\mc{A},\mc{B},\mc{C})$ and
 $(B^{\tn{q}})^{n}\mc{E}(\mc{A},\mc{B},\mc{C})$ are isomorphic, so we therefore have an isomorphism
 \[
 K_{0}(B^{\tn{q}})^{n}\mc{E}(\mc{A},\mc{B},\mc{C}) \cong K_{0}(B^{\tn{q}})^{n}\mc{A} \times K_{0}(B^{\tn{q}})^{n}\mc{C} 
 \]
 for each $n$.
 Identifying the categories $\mc{E}^{n}(\mc{A},\mc{B},\mc{C})$ and $(B^{\tn{q}})^{n}\mc{E}(\mc{A},\mc{B},\mc{C})$, 
 a binary multicomplex in 
 $(B^{\tn{q}})^{n}\mc{E}(\mc{A},\mc{B},\mc{C})$ is diagonal in direction $i$ if and only if its 
 constituent binary multicomplexes in $(B^{\tn{q}})^{n}\mc{A}, (B^{\tn{q}})^{n}\mc{B}$ and $ (B^{\tn{q}})^{n}\mc{C}$
 are also diagonal in direction $i$. Similarly, if $A$ in $(B^{\tn{q}})^{n}\mc{A}$ and $C$ in 
 $(B^{\tn{q}})^{n}\mc{C}$ are diagonal then the binary multicomplexes corresponding to $(\exact{A}{A}{0})$
 and $(\exact{0}{C}{C})$ are diagonal as well, so the isomorphism
 $K_{0}(B^{\tn{q}})^{n}\mc{E}(\mc{A},\mc{B},\mc{C})
 \cong K_{0}(B^{\tn{q}})^{n}\mc{A} \times K_{0}(B^{\tn{q}})^{n}\mc{C}$ restricts to an isomorphism
 $T^{n}_{\mc{E}(\mc{A},\mc{B},\mc{C})} \cong  T^{n}_{\mc{A}} \times T^{n}_{\mc{B}}$. 
 Passing to the quotients yields the result.
\end{proof}

\section{The resolution theorem}
\label{resolution}
The resolution theorem relates the $K$-theory of an exact category to that of a larger exact category all of whose objects
have a finite resolution by objects of the first category. Its most well-known application states that the $K$-theory of a 
regular ring is isomorphic to its so-called $G$-theory (the $K$-theory of the exact category of \textit{all} 
finitely-generated $R$-modules). As in the proof of the additivity theorem, we adapt a simple proof for 
$K_{0}$ to work for all $K_{n}$. The main difficulty in this proof is verifying that the hypotheses of the theorem pass 
to exact categories of acyclic binary multicomplexes.\\

The general resolution theorem for exact categories (\cite{Quil1}, $\S 4$ Corollary 2) is a 
formal consequence of the following theorem, which is Theorem 3 of \cite{Quil1}.

\begin{thm}
\label{theorem 3}
 Let $\mc{P}$ be a full, additive subcategory of an exact category $\mc{M}$ that
 is closed under extensions and satisfies:
\begin{enumerate}
 \item For any exact sequence $\exact{P'}{P}{M}$ in $\mc{M}$, if $P$ is in $\mc{P}$
 then $P'$ is in $\mc{P}$.
 \item For any $M$ in $\mc{M}$ there exists a $P$ in $\mc{P}$ and an admissible
 epimorphism $P \twoheadrightarrow M$.
\end{enumerate}
 Then the inclusion functor $\mc{P} \hookrightarrow \mc{M}$ induces an isomorphism
 $K_{n}\mc{P} \cong K_{n}\mc{M}$ for all $n \geq 0$.
\end{thm}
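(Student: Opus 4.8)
The plan is to follow the schema of this paper: transport hypotheses (1) and (2) to the categories of acyclic binary multicomplexes, run the classical $K_0$ argument there, and then descend to the higher groups. Recall first the $K_0$ proof. By (2) every object $M$ of $\mc{M}$ admits an admissible epimorphism $P_0 \twoheadrightarrow M$ with $P_0$ in $\mc{P}$; its kernel $P_1$ sits in an exact sequence $\exact{P_1}{P_0}{M}$, so $P_1$ lies in $\mc{P}$ by (1). Thus $[M] = [P_0] - [P_1]$ is already in the image of $K_0\mc{P} \to K_0\mc{M}$, which gives surjectivity. For injectivity one defines $K_0\mc{M} \to K_0\mc{P}$ by $[M] \mapsto [P_0] - [P_1]$; comparing two such length-one resolutions by their pullback (a Schanuel argument) shows this is well defined, and additivity on short exact sequences follows from (1) together with the $3 \times 3$ lemma. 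These two maps are mutually inverse.

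To repeat this for $K_n$ I would check that $(B^{\tn{q}})^n\mc{P} \hookrightarrow (B^{\tn{q}})^n\mc{M}$ again satisfies (1) and (2). It is a full additive subcategory closed under extensions by iterating the remark preceding Definition\autoref{binary-multi}. Hypothesis (1) transports at once: a short exact sequence $\exact{P'}{P}{M}$ of binary multicomplexes is term-wise short exact, so if every entry of $P$ lies in $\mc{P}$ then (1) for $\mc{P} \subseteq \mc{M}$ puts every entry of $P'$ in $\mc{P}$, and since $P'$ is already an acyclic binary multicomplex it lies in $(B^{\tn{q}})^n\mc{P}$. Writing $(B^{\tn{q}})^n\mc{N} = B^{\tn{q}}(B^{\tn{q}})^{n-1}\mc{N}$, I would reduce hypothesis (2) by induction on $n$ to the following statement: if a full additive extension-closed subcategory $\mc{Q} \subseteq \mc{D}$ satisfies (1) and (2), then so does $B^{\tn{q}}\mc{Q} \subseteq B^{\tn{q}}\mc{D}$.

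The main obstacle is this inductive step. Its heart is the single-differential case: given a bounded acyclic complex $M$ in $C^{\tn{q}}\mc{D}$, produce a bounded acyclic complex $P$ in $C^{\tn{q}}\mc{Q}$ with an \emph{admissible} epimorphism $P \twoheadrightarrow M$ (the binary case, resolving compatibly with two differentials, then proceeds in the same way). The difficulty is that admissibility is not a term-wise condition: by Example\autoref{complex-epi} a chain map that is an admissible epimorphism in every degree may have a non-acyclic kernel, so the cover must be engineered to have an acyclic kernel. The naive recipe---choosing covers of the cycle objects $Z_k$ in $\mc{Q}$ and splicing them by the horseshoe lemma---breaks down, because it requires lifting maps out of objects of $\mc{Q}$ through the admissible epimorphisms $M_k \twoheadrightarrow Z_{k-1}$, and objects of $\mc{Q}$ need not be projective. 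What (1) and (2) do supply is a weak lifting property: any map $Q \to Z$ from an object of $\mc{Q}$ can be lifted through an admissible epimorphism $M_k \twoheadrightarrow Z$ after pulling back and precomposing with a further admissible epimorphism $Q' \twoheadrightarrow Q$ from $\mc{Q}$. I would therefore build $P$ by induction on the length of $M$, covering the relevant cycle and boundary objects, using the weak lifting property to extend the chain map one degree at a time and to enlarge the cover when necessary, and using boundedness to ensure the process terminates; hypothesis (1) and the $3 \times 3$ lemma are what keep the kernel acyclic at each stage. Making this construction work is the essential insight attributed to Grayson.

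Granting this, the $K_0$ resolution theorem gives an isomorphism $K_0(B^{\tn{q}})^n\mc{P} \cong K_0(B^{\tn{q}})^n\mc{M}$, and it remains to descend to the quotients by the diagonal subgroups. The inclusion carries diagonal binary multicomplexes to diagonal ones, hence $T^n_{\mc{P}}$ into $T^n_{\mc{M}}$; the key point is that the inverse does the same. For this I would perform the resolution above for objects of $C^{\tn{q}}(B^{\tn{q}})^{n-1}\mc{M}$ and apply the diagonal functor $\Delta_i$, so that a multicomplex diagonal in direction $i$ receives a resolution that is again diagonal in direction $i$. Then $[M] \mapsto [P_0] - [P_1]$ sends the generators of $T^n_{\mc{M}}$ into $T^n_{\mc{P}}$, the $K_0$-isomorphism restricts to an isomorphism $T^n_{\mc{P}} \cong T^n_{\mc{M}}$, and passing to the quotients yields $K_n\mc{P} \cong K_n\mc{M}$ exactly as at the end of the proof of Theorem \ref{additivity}.
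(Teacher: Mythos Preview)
Your overall strategy matches the paper exactly: transport hypotheses (1) and (2) to $(B^{\tn{q}})^n\mc{P} \subseteq (B^{\tn{q}})^n\mc{M}$, invoke the $K_0$ argument, and then check that the inverse $[M] \mapsto [P] - [P']$ carries $T^n_{\mc{M}}$ into $T^n_{\mc{P}}$ by resolving diagonal multicomplexes diagonally. Your treatment of hypothesis (1) and of the diagonal descent via $\Delta_i$ also agree with the paper.

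Where you diverge is in the construction of the resolution, and your handling of the binary case is where the real gap lies. The paper does \emph{not} build $P$ by induction on the length of $M$ via weak lifting. Instead, for each degree $k$ it fixes a single cover $\epsilon_k : Q_k \twoheadrightarrow M_k$ and writes down, all at once, an explicit acyclic binary complex $P^k$ in $\mc{P}$ together with a morphism $\zeta^k : P^k \to M$; then $P = \bigoplus_k P^k$ and $\zeta = [\zeta^n \ \cdots \ \zeta^0]$. In the diagonal case (Lemma~\ref{diagonal res}) $P^k$ is just the two-term complex $Q_k \stackrel{1}{\to} Q_k$ in degrees $k, k-1$. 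In the non-diagonal case (Lemma~\ref{arbitrary res}), and this is what Grayson's insight actually is, $P^k$ is the ``Koszul-like'' binary complex
\[
Q_k \rightrightarrows Q_k \oplus Q_k \rightrightarrows \cdots \rightrightarrows Q_k \oplus Q_k \rightrightarrows Q_k
\]
carrying the two shift differentials $\left[\begin{smallmatrix}0&1\\0&0\end{smallmatrix}\right]$ and $\left[\begin{smallmatrix}0&0\\1&0\end{smallmatrix}\right]$, and the vertical maps $[\delta_{k,l}\ \delta'_{k,l}]$ are built by alternately post-composing with $d$ and $d'$ so as to commute with \emph{both} differentials of $M$. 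Acyclicity of each $P^k$ is then automatic, and acyclicity of $\ker\zeta$ is checked directly on the cycle objects.

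Your parenthetical ``the binary case \ldots\ then proceeds in the same way'' is exactly where this bites. A degree-by-degree weak-lifting construction is plausible for a single differential, but with two independent differentials you must arrange acyclicity of the kernel for both $d$ and $d'$ simultaneously, and it is not clear how enlarging the cover at stage $k$ for one differential interacts with the constraint imposed by the other. The paper sidesteps this entirely by making $P^k$ an acyclic binary complex from the outset and pushing all the work into defining a compatible morphism to $M$; that is the trick you have not supplied.
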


\begin{proof}
For $K_{0}$ the inverse
to the induced homomorphism $K_{0}\mc{P} \rightarrow K_{0}\mc{M}$ is given by the map 
\[
\begin{array}{rrcl}
 \phi : & K_{0}\mc{M} & \rightarrow & K_{0}\mc{P} \\
 & [M] & \mapsto & [P] - [P'],
\end{array}
\]
where $\exact{P'}{P}{M}$ is a short exact sequence of $\mc{M}$. 
The proof of Theorem\autoref{theorem 3} for $n=0$ is the simple exercise of checking that $\phi$ is well-defined.
We noted earlier that if $\mc{P}$ is closed under extensions in $\mc{M}$ then $(B^{\tn{q}})^{n}\mc{P}$ is closed 
under extensions in $(B^{\tn{q}})^{n}\mc{M}$ for each $n$, and by the same reasoning, one easily sees that if 
$\mc{P}$ and $\mc{M}$ satisfy hypothesis $(1)$ of the theorem, then so do $(B^{\tn{q}})^{n}\mc{P}$ and 
$(B^{\tn{q}})^{n}\mc{M}$ for each $n$. 
The following proposition is about hypothesis $(2)$.
\begin{prop}
\label{binary-multi-epi}
 Let $\mc{P}$ and $\mc{M}$ satisfy the hypotheses of\autoref{theorem 3}.
 For every object $M$ of $(B^{\tn{q}})^{n}\mc{M}$ there exists a short exact sequence
 $\exact{P'}{P}{M}$ of $(B^{\tn{q}})^{n}\mc{M}$ with $P'$ and $P$ in $(B^{\tn{q}})^{n}\mc{P}$. 
 Furthermore, if $M$ is a diagonal binary multicomplex then we may choose
 $P$ and $P'$ to be diagonal as well.
\end{prop}
We shall prove Proposition\autoref{binary-multi-epi} shortly. We now continue with the proof of
Theorem\autoref{theorem 3}. Together with the known isomorphism for $K_{0}$, 
the first part of the proposition implies that the induced map
$K_{0}(B^{\tn{q}})^{n}\mc{P} \rightarrow K_{0}(B^{\tn{q}})^{n}\mc{M}$ is an isomorphism
for each $n$. Clearly this isomorphism sends elements of $T^{n}_{\mc{P}}$ to elements of $T^{n}_{\mc{M}}$.
Since the value of $\phi$ is independent of the choice of resolution, the second part of the proposition
implies that $\phi$ maps elements of $T^{n}_{\mc{M}}$ to elements of $T^{n}_{\mc{P}}$. The isomorphism
therefore descends to an isomorphism $K_{n}\mc{P} \rightarrow K_{n}\mc{M}$.
\end{proof}

It remains then to prove Proposition\autoref{binary-multi-epi}, 
so for the rest of this section we assume the hypotheses of Theorem\autoref{theorem 3}. 
The idea of the proof is to construct, for each $M$ in $(B^{\tn{q}})^{n}\mc{M}$,
a morphism of acyclic binary chain complexes $P \rightarrow M$ that is term-wise and admissible epimorphism, i.e. 
$P_{j} \twoheadrightarrow M_{j}$. By the assumption on $\mc{P}$ and $\mc{M}$ each of these admissible epimorphisms
is part of a short exact sequence $\exact{P'_{j}}{P_{j}}{M_{j}}$ with the $P_{j}$ in $\mc{P}$. 
The $P'_{j}$ form a binary complex with the induced maps, and we show that this binary complex is in fact acyclic. 
The result will then follow from an induction on the dimension. We shall rely on the following fact.

\begin{lem}
 \label{admissible sums}
 Let $f_{i}: Q_{i} \rightarrow N$, $i = 1, \dots ,m$ be a family of morphisms in an exact category,
 at least one of which is an admissible epimorphism. Then the induced morphism
 \[
 \left[\begin{smallmatrix} f_{1} & \dots & f_{m} \end{smallmatrix}\right]: \bigoplus_{i=1}^{m} Q_{i} \rightarrow N
 \]
 is an admissible epimorphism as well. \qed
\end{lem}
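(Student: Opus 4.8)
The plan is to exhibit the induced morphism as a composite of admissible epimorphisms, so that the conclusion follows immediately from closure of admissible epimorphisms under composition. Throughout I would use only three standard facts about an exact category, all available in \cite{Buh}: that isomorphisms and split epimorphisms are admissible epimorphisms, that the direct sum of two admissible epimorphisms is again an admissible epimorphism, and that admissible epimorphisms are closed under composition.

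First I would set up convenient bookkeeping. Relabelling the summands amounts to composing the induced morphism with a permutation automorphism of $\bigoplus_{i} Q_{i}$, and since an admissible epimorphism composed with an isomorphism is again an admissible epimorphism, it is harmless to assume that $f_{1}$ is the admissible epimorphism in the family. Writing $R = \bigoplus_{i=2}^{m} Q_{i}$ and $h = \left[\begin{smallmatrix} f_{2} & \cdots & f_{m}\end{smallmatrix}\right] : R \to N$, the canonical isomorphism $\bigoplus_{i=1}^{m} Q_{i} \cong Q_{1} \oplus R$ identifies the morphism under consideration with $\left[\begin{smallmatrix} f_{1} & h \end{smallmatrix}\right] : Q_{1} \oplus R \to N$.

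The key step is then the factorization
\[
\left[\begin{smallmatrix} f_{1} & h \end{smallmatrix}\right]
= \left[\begin{smallmatrix} 1_{N} & h \end{smallmatrix}\right] \circ (f_{1} \oplus 1_{R}),
\]
whose validity one confirms by noting that both sides send a local section $(q,r)$ to $f_{1}q + hr$. Here $f_{1} \oplus 1_{R} : Q_{1} \oplus R \to N \oplus R$ is an admissible epimorphism, being the direct sum of the admissible epimorphisms $f_{1}$ and $1_{R}$; and $\left[\begin{smallmatrix} 1_{N} & h \end{smallmatrix}\right] : N \oplus R \to N$ is a split epimorphism, since it is a retraction of the coproduct inclusion $N \hookrightarrow N \oplus R$, and hence is an admissible epimorphism as well. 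The composite of the two is therefore an admissible epimorphism, which is exactly the claim.

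I do not expect a genuine obstacle here; the one point requiring care is to resist the tempting shortcut of arguing, from $\left[\begin{smallmatrix} f_{1} & \cdots & f_{m}\end{smallmatrix}\right] \circ \iota_{1} = f_{1}$, that the induced map must be an admissible epimorphism. That reasoning would amount to cancelling a split monomorphism on the right of an admissible epimorphism, a manipulation that is delicate in a general exact category. The explicit factorization above sidesteps the issue entirely, since at no stage does it do anything other than compose morphisms already known to be admissible epimorphisms.
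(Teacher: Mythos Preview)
Your proof is correct and follows essentially the same route as the paper's. The paper reduces to $m=2$ and factors $\left[\begin{smallmatrix} f_{1} & f_{2} \end{smallmatrix}\right]$ as a three-term composite $\left[\begin{smallmatrix} 1 & 0 \end{smallmatrix}\right] \circ \left[\begin{smallmatrix} 1 & f_{2} \\ 0 & 1 \end{smallmatrix}\right] \circ \left[\begin{smallmatrix} f_{1} & 0 \\ 0 & 1 \end{smallmatrix}\right]$; your two-term factorization simply absorbs the middle isomorphism into the split projection, and your grouping of $Q_{2},\dots,Q_{m}$ into $R$ handles the general case directly rather than by appeal to $m=2$.
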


\begin{proof}
 The general case follows from the case $m=2$. In this case, the morphism 
 $\left[\begin{smallmatrix} f_{1} & f_{2} \end{smallmatrix}\right]$ factors as the composition
 \[
 Q_{1} \oplus Q_{2} \stackrel{\left[\begin{smallmatrix} f_{1} & 0 \\ 0 & 1  \end{smallmatrix}\right]}{\longrightarrow} 
 N \oplus Q_{2} \stackrel{\left[\begin{smallmatrix} 1 & f_{2} \\ 0 & 1  \end{smallmatrix}\right]}{\longrightarrow}
 N \oplus Q_{2} \stackrel{\left[\begin{smallmatrix} 1 & 0 \end{smallmatrix}\right]}{\longrightarrow} N,
 \]
 all of which are admissible epimorphisms.
\end{proof}

We begin resolving binary complexes in the less involved case, in which we assume $M$ to be diagonal. 

\begin{lem}
\label{diagonal res}
 Given a diagonal bounded acyclic binary complex $M$ in $B^{\tn{q}}\mc{M}$ there exists a
 short exact sequence $\exact{P'}{P}{M}$ where $P'$ and $P$ are diagonal objects of $B^{\tn{q}}\mc{P}$.
\end{lem}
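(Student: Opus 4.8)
The plan is to reduce the diagonal case to the already-understood case of ordinary (non-binary) acyclic complexes, exploiting the fact that a diagonal binary complex $M = \Delta(\underline{M})$ carries no extra information beyond the underlying acyclic complex $\underline{M}$ in $C^{\tn{q}}\mc{M}$. First I would construct a term-wise admissible epimorphism onto the underlying complex. For each degree $j$, hypothesis $(2)$ of Theorem\autoref{theorem 3} supplies an object $Q_{j}$ of $\mc{P}$ together with an admissible epimorphism $Q_{j} \twoheadrightarrow M_{j}$. The naive choice $P_{j} = Q_{j}$ will not assemble into a chain map, so instead I would set
\[
 P_{j} = Q_{j} \oplus Q_{j-1},
\]
and define the differential on $P$ so that it covers the differential of $\underline{M}$ while making $P$ itself acyclic. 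This is the standard ``mapping-cone'' or ``path-object'' trick: the summand $Q_{j-1}$ is carried into degree $j-1$ by the identity, so that $P$ is forced to be contractible, hence acyclic in the sense of Definition\autoref{acyclic}, and the projection $P_{j} \to M_{j}$ assembles into a genuine admissible epimorphism of complexes by Lemma\autoref{admissible sums}.

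Once I have an admissible epimorphism $\underline{P} \twoheadrightarrow \underline{M}$ in $C^{\tn{q}}\mc{M}$ with $\underline{P}$ in $C^{\tn{q}}\mc{P}$, I would apply the diagonal functor $\Delta : C^{\tn{q}}\mc{M} \rightarrow B^{\tn{q}}\mc{M}$. Since $\Delta$ is exact and $M = \Delta(\underline{M})$, the image $P := \Delta(\underline{P})$ is a diagonal object of $B^{\tn{q}}\mc{P}$, and $\Delta$ applied to the term-wise short exact sequence $\exact{\underline{P'}}{\underline{P}}{\underline{M}}$ yields a short exact sequence $\exact{P'}{P}{M}$ in $B^{\tn{q}}\mc{M}$. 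Here $P'$ is the term-wise kernel; by hypothesis $(1)$ of Theorem\autoref{theorem 3}, applied degreewise to $\exact{P'_{j}}{P_{j}}{M_{j}}$ with $P_{j}$ in $\mc{P}$, each $P'_{j}$ lies in $\mc{P}$. Because $P'$ sits inside the diagonal complex $P$ as a subcomplex with equal top and bottom differentials, it is automatically diagonal as well, so $P'$ is a diagonal object of $B^{\tn{q}}\mc{P}$.

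The two things that require genuine verification are that $\underline{P}$ built as above is acyclic and that $\underline{P'}$ is acyclic. Acyclicity of $\underline{P}$ is the main obstacle: I must check that the differentials I write down on $Q_{j} \oplus Q_{j-1}$ genuinely factor through short exact sequences of $\mc{P}$, not merely that the complex is exact as a diagram of abelian groups — this is precisely the subtlety warned about in Example\autoref{proj and free}. The contractibility of the mapping-cone construction should make this transparent, since a contractible bounded complex in an exact category is acyclic. Acyclicity of $\underline{P'}$ then follows formally: $\underline{P'}$ is the kernel of an admissible epimorphism $\underline{P} \twoheadrightarrow \underline{M}$ of acyclic complexes, and the category $C^{\tn{q}}\mc{M}$ is exact (\cite{Gray1}, $\S 6$), so kernels of admissible epimorphisms between its objects are again objects of $C^{\tn{q}}\mc{M}$, i.e. acyclic. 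Applying $\Delta$ throughout transports all of this into $B^{\tn{q}}$, completing the proof.
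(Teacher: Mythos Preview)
Your overall strategy matches the paper's: pass to $C^{\tn{q}}\mc{M}$ via $\Delta$, cover each $M_{j}$ by some $Q_{j}\in\mc{P}$, assemble these into a split-acyclic complex $\underline{P}$ mapping onto $\underline{M}$, and take the term-wise kernel. The paper builds $\underline{P}$ as the direct sum of the two-term complexes $Q_{k}\stackrel{1}{\to}Q_{k}$, which is exactly your $P_{j}\cong Q_{j}\oplus Q_{j\pm 1}$ up to reindexing.

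The gap is in your argument that $\underline{P'}$ is acyclic. You write that $\underline{P'}$ is ``the kernel of an admissible epimorphism $\underline{P}\twoheadrightarrow\underline{M}$'' in $C^{\tn{q}}\mc{M}$ and conclude acyclicity from the exactness of $C^{\tn{q}}\mc{M}$. But this is circular: a morphism in $C^{\tn{q}}\mc{M}$ is an admissible epimorphism precisely when it fits into a short exact sequence there, i.e.\ when its term-wise kernel is itself an object of $C^{\tn{q}}\mc{M}$---which is what you are trying to prove. Having term-wise admissible epimorphisms is not enough; this is exactly the phenomenon the paper warns about in Example~\ref{complex-epi}, where a term-wise admissible epimorphism between acyclic complexes has a non-acyclic kernel. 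Lemma~\ref{admissible sums} only gives you the degreewise statement.

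What the paper does instead is bypass this circularity by working one level down: it checks that the induced maps on the \emph{factoring objects} (the images $Z_{j}$ through which the differentials of $M$ factor, and the corresponding objects $Y^{k}_{j}$ for each $P^{k}$) are themselves admissible epimorphisms. Concretely, the summand $Y^{j}_{j}=Q_{j}$ maps onto $Z_{j}$ via the composite $Q_{j}\twoheadrightarrow M_{j}\twoheadrightarrow Z_{j}$, so by Lemma~\ref{admissible sums} the full map $\bigoplus_{k}Y^{k}_{j}\to Z_{j}$ is an admissible epimorphism. Its kernel lies in $\mc{P}$ by hypothesis~(1), and these kernels are precisely the objects through which the differentials of $P'$ factor, so $P'$ is acyclic in $\mc{P}$. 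Your proposal needs this explicit verification on the cycle objects; contractibility of $\underline{P}$ alone does not supply it in a general exact category.
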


\begin{proof}
 We may consider $M$ as an object of $C^{\tn{q}}\mc{N}$, as 
 $\Delta : C^{\tn{q}}\mc{N} \rightarrow B^{\tn{q}}\mc{N}$ is a full embedding for any exact category $\mc{N}$.
 Represent $M$ in $C^{\tn{q}}\mc{N}$ as below. Without loss of generality we assume that
 $M$ ends at place $0$.
 \[ 
 \xymatrix{
 0 \ar[r] 
 & M_{n} \ar^{d}[r] 
 & \cdots \ar^{d}[r]  
 & M_{k} \ar^{d}[r] 
 & M_{k - 1} \ar^{d}[r] 
 & \cdots \ar^{d}[r] 
 & M_{0} \ar[r] 
 & 0, 
 }
 \]
 Since $\mc{P}$ and $\mc{M}$ satisfy the hypotheses of Theorem\autoref{theorem 3}, 
 there exists an object $Q_{k}$ of $\mc{P}$ and an admissible epimorphism
 $\epsilon_{k} : Q_{k} \twoheadrightarrow M_{k}$ in $\mc{M}$ for each $0 \leq k \leq n$.
 The diagram below is a morphism in $B^{\tn{q}}\mc{M}$ with target $M$,
 and its upper row, the source of the morphism, is an object of $C^{\tn{q}}\mc{P}$.
\[ 
\xymatrix{
0 \ar[r] \ar[d] 
& 0 \ar[d]
\ar[r] 
& \cdots \ar[r] 
& Q_{k} \ar@{=}[r]  \ar@{->>}^{\epsilon_{k}}[d]
& Q_{k} \ar[r] \ar^{d\epsilon_{k}}[d]
& \cdots \ar[r] 
& Q \ar[r]  \ar[d]
& Q \ar[d] \\
0 \ar[r] 
& M_{n} \ar^{d}[r] 
& \cdots \ar^{d}[r]
& M_{k} \ar^{d}[r]
& M_{k - 1} \ar^{d}[r]
& \cdots \ar^{d}[r] 
& M_{0} \ar[r] 
& 0  
 } 
 \]
We denote the top row by $P^{k}$ and the morphism of complexes by $\zeta^{k} : P^{k} \rightarrow M$.
We do this for each $k \in \{ 0, \dots , n \}$ and form the sum
\[
 \zeta := \left[\begin{smallmatrix}  \zeta^{n} & \dots & \zeta^{0} \end{smallmatrix}\right]
 : \bigoplus P^{k} \rightarrow M.
 \]
 Call the direct sum $P$. Each of the complexes $P^{k}$ is acyclic, so $\mc{P}$ is as well. Consider the
 morphism $\zeta_{j} : P_{j} \rightarrow M_{j}$, the part of $\zeta$ from the $j^{\tn{th}}$ term of $P_{j}$ to the
 $j^{\tn{th}}$ term of $M$. By construction and Lemma\autoref{admissible sums}, it is an admissible 
 epimorphism of $\mc{M}$.
 We now have a morphism $\zeta : P \rightarrow M$ that has term-wise morphisms all
 admissible epimorphisms. The kernels of the term-wise admissible epimorphisms are all in $\mc{P}$ 
 by the hypotheses on $\mc{P}$ and $\mc{M}$.
 These kernels form a (as we have seen in $\S 1$, not \textit{a priori} acyclic) bounded chain complex 
 $P'$ in $\mc{P}$ under the induced maps between them. Moreover, $\mc{P}'$ must be diagonal as $P$ and $M$ are. \\
 It remains to show that this 
 chain complex is acyclic (i.e., that $P'$ is in $B^{\tn{q}}\mc{P}$),
 then $\exact{P'}{P}{M}$ will be a short exact sequence of acyclic complexes. 
 The objects of $P'$ are in
 $\mc{P}$ and $P'$ is acyclic in $\mc{M}$ by standard homological algebra, 
 but this does not guarantee the acyclicity of $P'$ in $\mc{P}$, as evidenced by 
 Example\autoref{complex-epi}. Suppose that $M$ factors through objects $Z_{j}$ of $\mc{M}$, and
 that each $P^{k}$ factors through objects $Y^{k}_{j}$ of $\mc{P}$. Then $P'$ factors through the kernels
 of the corresponding morphisms 
 \[
  \bigoplus Y^{k}_{j} \rightarrow Z_{j}.
 \]
 Since $\mc{P}$ is closed under kernels of admissible epimorphisms to objects of $\mc{M}$, 
 it is enough to show that each of these morphisms is an admissible epimorphism,
 and by Lemma\autoref{admissible sums} it is enough in turn to show that, for each $j$, one of the morphisms
 $Y^{k}_{j} \rightarrow Z_{j}$ is an admissible epimorphism. The diagram below shows that this
 is true for $k = j$, as $Y^{j}_{j} = Q_{j}$.
 \[
  \xymatrix{
  Q_{j} \ar@{=}[r] \ar@{->>}[d] & Q_{j} \ar@{=}[r] \ar[d] & Q_{j} \ar[d] \\
  M_{j} \ar@{->>}[r] & Z_{j} \ar@{ >->}[r] & M_{j-1}.
  }
 \]
 Finally we consider $P'$, $P$ and $M$ now as diagonal binary complexes (by applying $\Delta$). Then
 $\exact{P'}{P}{M}$ is the required short exact sequence of acyclic diagonal binary complexes.
\end{proof}

A little more work is required if the binary complex $M$ is not diagonal. The idea in this case is due to Grayson,
and relies on the acyclicity of the chain complexes
\[ 0 \longrightarrow Q 
  \stackrel{\left[\begin{smallmatrix} 1 \\ 0 \end{smallmatrix}\right]}{\longrightarrow} Q \oplus Q
  \stackrel{\left[\begin{smallmatrix} 0 & 1 \\ 0 & 0 \end{smallmatrix}\right]}{\longrightarrow} Q \oplus Q
  \stackrel{\left[\begin{smallmatrix} 0 & 1 \\ 0 & 0 \end{smallmatrix}\right]}{\longrightarrow} \cdots
  \stackrel{\left[\begin{smallmatrix} 0 & 1 \\ 0 & 0 \end{smallmatrix}\right]}{\longrightarrow} Q \oplus Q
  \stackrel{\left[\begin{smallmatrix} 0 & 1  \end{smallmatrix}\right]}{\longrightarrow} Q
  \longrightarrow 0
  \]
and
  \[ 0 \longrightarrow Q 
  \stackrel{\left[\begin{smallmatrix} 0 \\ 1 \end{smallmatrix}\right]}{\longrightarrow} Q \oplus Q
  \stackrel{\left[\begin{smallmatrix} 0 & 0 \\ 1 & 0 \end{smallmatrix}\right]}{\longrightarrow} Q \oplus Q
  \stackrel{\left[\begin{smallmatrix} 0 & 0 \\ 1 & 0 \end{smallmatrix}\right]}{\longrightarrow} \cdots
  \stackrel{\left[\begin{smallmatrix} 0 & 0 \\ 1 & 0 \end{smallmatrix}\right]}{\longrightarrow} Q \oplus Q
  \stackrel{\left[\begin{smallmatrix} 1 & 0  \end{smallmatrix}\right]}{\longrightarrow} Q
  \longrightarrow 0
  \]
of arbitrary length, where $Q$ is an object of any exact category.

\begin{lem}
\label{arbitrary res}
 Given an arbitrary bounded acyclic binary complex $M$ in $B^{\tn{q}}\mc{M}$ there exists a
 short exact sequence $\exact{P'}{P}{M}$, where $P'$ and $P$ are objects of $B^{\tn{q}}\mc{P}$.
\end{lem}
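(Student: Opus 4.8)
The plan is to follow the architecture of the proof of Lemma~\ref{diagonal res}. For each degree $k$ I would choose an admissible epimorphism $\epsilon_{k} : Q_{k} \twoheadrightarrow M_{k}$ with $Q_{k}$ in $\mc{P}$, assemble these into a single binary complex $P$ that is acyclic in \emph{both} differentials, equip it with a chain map $\zeta : P \rightarrow M$ commuting with both differentials and term-wise an admissible epimorphism, and finally check that the term-wise kernel $P'$ is again acyclic. The genuinely new feature, absent in the diagonal case, is that one underlying graded object must carry two acyclic differentials at once while still admitting a map to $M$ compatible with each; this is exactly what the two displayed acyclic complexes are designed to supply.

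For each $k$ I would build a gadget $P^{k}$ whose underlying graded object is a single copy of $Q_{k}$ in degree $k$, copies of $Q_{k} \oplus Q_{k}$ in degrees $k-1$ down to $0$, and a single copy of $Q_{k}$ in degree $-1$ (assuming, as we may, that $M$ ends at place $0$). I take the $d$-differential of $P^{k}$ to be the first displayed complex on $Q_{k}$ and its $\tilde{d}$-differential to be the second; both are acyclic, so $P^{k}$ is an acyclic binary complex by construction. The chain map $\zeta^{k} : P^{k} \rightarrow M$ is $\epsilon_{k}$ in degree $k$, is $\left[\begin{smallmatrix} d\epsilon_{k} & \tilde{d}\epsilon_{k} \end{smallmatrix}\right]$ in degree $k-1$, and on the two summands in lower degrees is given by the appropriate alternating word in $d$ and $\tilde{d}$ applied to $\epsilon_{k}$. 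Using only $d^{2} = \tilde{d}^{2} = 0$ one verifies that $\zeta^{k}$ commutes with both differentials; the gadget is made to run one step past the bottom of $M$ precisely so that its bottom term maps into $M_{-1} = 0$, which is what reconciles the two alternating words there. Setting $P := \bigoplus_{k} P^{k}$ and $\zeta := \left[\begin{smallmatrix} \zeta^{n} & \cdots & \zeta^{0} \end{smallmatrix}\right]$, the summand $\zeta^{j}_{j} = \epsilon_{j}$ is an admissible epimorphism in degree $j$, so Lemma~\ref{admissible sums} shows $\zeta$ is a term-wise admissible epimorphism onto $M$.

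It then remains to show that the term-wise kernel $P'$, whose objects lie in $\mc{P}$ by hypothesis~$(1)$ of Theorem~\ref{theorem 3}, is acyclic in both differentials, so that $\exact{P'}{P}{M}$ is the required short exact sequence of $B^{\tn{q}}\mc{M}$ with $P'$ and $P$ in $B^{\tn{q}}\mc{P}$. Here I would run the cycle-object argument from Lemma~\ref{diagonal res} twice, once for $d$ and once for $\tilde{d}$. Writing $M$ as factoring through cycle objects $Z_{j}$ and each $P^{k}$ through cycle objects $Y^{k}_{j}$, the complex $P'$ factors through the kernels of the induced maps $\bigoplus_{k} Y^{k}_{j} \rightarrow Z_{j}$; since $\mc{P}$ is closed under kernels of admissible epimorphisms onto objects of $\mc{M}$, it suffices by Lemma~\ref{admissible sums} to produce for each $j$ one value of $k$ for which $Y^{k}_{j} \rightarrow Z_{j}$ is an admissible epimorphism.

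I expect the main obstacle to be the gadget construction itself: arranging $\zeta^{k}$ to be compatible with both differentials simultaneously, where the rigid shape forced by the two displayed complexes leaves little freedom and the compatibility in the lowest degrees is delicate. Once the bookkeeping with the alternating words is in place, passing to the direct sum and verifying acyclicity of the kernel are faithful adaptations of the diagonal case already treated in Lemma~\ref{diagonal res}.
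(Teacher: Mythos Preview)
Your proposal is correct and follows essentially the same approach as the paper: the paper builds the same gadgets $P^{k}$ from the two displayed acyclic complexes, defines the vertical maps by exactly the alternating words you describe (packaged there as an inductive definition $\delta_{k,l+1} = d \circ \delta'_{k,l}$, $\delta'_{k,l+1} = d' \circ \delta_{k,l}$), takes the direct sum, and then runs the cycle-object argument of Lemma~\ref{diagonal res} separately for each differential with $k = j$. Your remark about the degree $-1$ term ``reconciling the two alternating words'' is precisely why the construction terminates cleanly once $M$ has ended.
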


\begin{proof}
 Let $M$ denote the element of $B^{q}\mc{M}$ given by the binary complex below
\[ \xymatrix{
0 \ar@<.5ex>[r] \ar@<-.5ex>[r] 
& M_{n} \ar@<.5ex>^{d}[r] \ar@<-.5ex>_{d'}[r]
& \cdots \ar@<.5ex>^{d}[r] \ar@<-.5ex>_{d'}[r] 
& M_{k} \ar@<.5ex>^{d}[r] \ar@<-.5ex>_{d'}[r] 
& M_{k-1} \ar@<.5ex>^{d}[r] \ar@<-.5ex>_{d'}[r] 
& \cdots \ar@<.5ex>^{d}[r] \ar@<-.5ex>_{d'}[r] 
& M_{0} \ar@<.5ex>[r] \ar@<-.5ex>[r] 
& 0 
} \]
and as before let $\epsilon_{k} : Q_{k} \twoheadrightarrow M_{k}$
be admissible epimorphisms in $\mc{M}$ with $Q_{k}$ in $\mc{P}$. 
Inductively define two collections of morphisms $\delta_{k, l}, \delta'_{k,l} : Q_{k} \rightarrow M_{k-l}$
by 
\begin{equation*}
  \begin{cases}
    \delta_{k,1} = d \circ \epsilon_{k} \\
    \delta'_{k,1} = d' \circ \epsilon_{k} 
  \end{cases}
\end{equation*}
and
\begin{equation*}
  \begin{cases}
    \delta_{k,l+1} = d \circ \delta'_{k,l} \\
    \delta'_{k,l+1} = d' \circ \delta_{k,l}. 
  \end{cases}
\end{equation*}
Since each of its differentials is acyclic, the top row of the diagram below is an object of
$B^{\tn{q}}\mc{P}$ for each $k \in \{ 0, \dots , n \}$.
\[ \xymatrix{
0 \ar@<.5ex>[r] \ar@<-.5ex>[r]
& Q_{k} \ar@{->>}^{\epsilon_{k}}[dd] 
\ar@<.5ex>^{\left[\begin{smallmatrix} 1 \\ 0 \end{smallmatrix}\right]}[r]
\ar@<-.5ex>_{\left[\begin{smallmatrix} 0 \\ 1 \end{smallmatrix}\right]}[r]
& Q_{k} \oplus Q_{k} \ar^{\left[\begin{smallmatrix} \delta_{k,1} & \delta'_{k,1} \end{smallmatrix}\right]}[dd] 
\ar@<.5ex>^{\left[\begin{smallmatrix} 0 & 1 \\ 0 & 0 \end{smallmatrix}\right]}[r]
\ar@<-.5ex>_{\left[\begin{smallmatrix} 0 & 0 \\ 1 & 0 \end{smallmatrix}\right]}[r]
& Q_{k} \oplus Q_{k} \ar^{\left[\begin{smallmatrix} \delta_{k,2} & \delta'_{k,2} \end{smallmatrix}\right]}[dd] 
\ar@<.5ex>^{\left[\begin{smallmatrix} 0 & 1 \\ 0 & 0 \end{smallmatrix}\right]}[r]
\ar@<-.5ex>_{\left[\begin{smallmatrix} 0 & 0 \\ 1 & 0 \end{smallmatrix}\right]}[r]
& \cdots \ar@<.5ex>^{\left[\begin{smallmatrix} 0 & 1 \\ 0 & 0 \end{smallmatrix}\right]}[r]
\ar@<-.5ex>_{\left[\begin{smallmatrix} 0 & 0 \\ 1 & 0 \end{smallmatrix}\right]}[r]
& Q_{k} \oplus Q_{k} \ar^{\left[\begin{smallmatrix} \delta_{k,k} & \delta'_{k,k} \end{smallmatrix}\right]}[dd] 
\ar@<.5ex>^{\left[\begin{smallmatrix} 1 & 0  \end{smallmatrix}\right]}[r]
\ar@<-.5ex>_{\left[\begin{smallmatrix} 0 & 1  \end{smallmatrix}\right]}[r]
& Q_{k} \ar[dd]  \\
\\
\cdots \ar@<.5ex>[r] \ar@<-.5ex>[r]
& M_{k} \ar@<.5ex>^{d}[r] \ar@<-.5ex>_{d'}[r] 
& M_{k-1} \ar@<.5ex>^{d}[r] \ar@<-.5ex>_{d'}[r] 
& M_{k-2} \ar@<.5ex>^{d}[r] \ar@<-.5ex>_{d'}[r] 
& \cdots \ar@<.5ex>^{d}[r] \ar@<-.5ex>_{d'}[r] 
& M_{0} \ar@<.5ex>[r] \ar@<-.5ex>[r] 
& 0.
} \]
The morphisms $\delta_{k, l}$ and $\delta'_{k,l}$ have been 
constructed so that the vertical morphisms commute with the top and bottom differentials, so the diagram
represents a morphism in $B^{\tn{q}}\mc{M}$, which we shall again denote by $\zeta^{k} : P^{k} \rightarrow M$. 
Following the same method of proof as of the previous lemma, each $P^{k}$ is acyclic so their direct sum 
is acyclic as well and so
\[
 \zeta := \left[\begin{smallmatrix} \zeta^{n} & \dots & \zeta^{0} \end{smallmatrix}\right]
 : \bigoplus P^{k}  \rightarrow M.
 \]
 is a morphism in $B^{\tn{q}}\mc{M}$.
 By construction and Lemma\autoref{admissible sums} again, each term-wise morphism 
 $\zeta^{j} : P_{j} \rightarrow M_{j}$ is an
 admissible epimorphism in $\mc{M}$. Each of these morphisms therefore has a kernel in $\mc{P}$ and 
 these kernels form a binary complex with the induced maps. \\
 We wish to show that both differentials of this binary complex is acyclic in $\mc{P}$. 
 Consider the top differential first. Denoting the objects that the top differentials of $M$ and each $P^{k}$ 
 factor through by $Z_{j}$ and $Y^{k}_{j}$, it is enough to
 show for each $j$ that one of the morphisms $Y^{k}_{j} \rightarrow Z_{j}$ is an admissible epimorphism,
 exactly as in the proof of Lemma\autoref{diagonal res}.
 Taking $k = j$ again yields the result, as shown by the diagram below.
 \[
  \xymatrix{
  Q_{j}  \ar@{=}[r] \ar@{->>}[d] & Q_{j} \ar@{ >->}[r]^{\left[\begin{smallmatrix} 1 \\ 0 \end{smallmatrix}\right]}
  \ar[d] & Q_{j} \oplus Q_{j} \ar[d] \\
  M_{j} \ar@{->>}[r] & Z_{j} \ar@{ >->}[r] & M_{j-1}.
  }
 \]
 The bottom differential is dealt with entirely analogously.
\end{proof}

\begin{proof}[Proof of Proposition\autoref{binary-multi-epi}.]
 We proceed by induction on $n$. In the base case $n = 0$, there is nothing to show. 
 For the inductive step, we view an acyclic binary multicomplex $M$ in $(B^{\tn{q}})^{n+1}\mc{M}$,
 as an acyclic binary multicomplex of objects of $(B^{\tn{q}})^{n}\mc{M}$, i.e. as an object of
 $B^{q}(B^{\tn{q}})^{n}\mc{M}$. By the inductive hypothesis, the inclusion of $(B^{\tn{q}})^{n}\mc{P}$ into
 $(B^{\tn{q}})^{n}\mc{M}$ satisfies the hypotheses of Theorem\autoref{theorem 3}, so by Lemma\autoref{arbitrary res}
 there exists a short exact sequence $\exact{P'}{P}{M}$ in $B^{q}(B^{\tn{q}})^{n}\mc{M}$ with $P'$ and $P$ in
 $B^{q}(B^{\tn{q}})^{n}\mc{P} = (B^{\tn{q}})^{n+1}\mc{P}$, and so the first part follows. For the second part,
 suppose that $M$ is diagonal in some direction $i$. We consider
 $M$ as a diagonal acyclic binary complex of (not necessarily diagonal) objects of $(B^{\tn{q}})^{n}\mc{M}$,
 that is, we ``expand'' $M$ along the $i$ direction.
 Then by Lemma\autoref{diagonal res} there exist diagonal acyclic binary complexes $P'$ and $P$ in 
 $(B^{\tn{q}})^{n+1}\mc{P}$ that are diagonal in direction $i$, and an exact sequence
 $\exact{P'}{P}{M}$, so the proof is complete.
\end{proof}

\section{The cofinality theorem}
\label{cofinality}
Unlike the additivity and resolution theorems, the cofinality theorem was not proved by Quillen in \cite{Quil1}.
A proof for exact categories based on work by Gersten was given in \cite{Gray3}. More general versions can be found in
\cite{Staffeldt} and \cite{Thomason}. It is proven in \cite{Gray1} that the hypotheses of the cofinality theorem 
are satisfied by the appropriate exact categories of acyclic binary complexes, the main work in our proof is in 
ensuring that the results pass to the quotients defining $K_{n}$.

\begin{defn}
 An exact subcategory $\mc{M}$ of an exact category $\mc{N}$ is said to be \textit{cofinal} in $\mc{N}$ if
 for every object $N_{1}$ of $\mc{N}$ there exists another object $N_{2}$ of $\mc{N}$ such that
 $N_{1} \oplus N_{2}$ is isomorphic to an object of $\mc{M}$.
\end{defn}

An obvious example of a cofinal exact subcategory is the category of free $R$-modules inside the category 
of projective $R$-modules, for any ring $R$. More generally, every exact category is cofinal in
its \textit{Karoubification} (\cite{Thomason}, Appendix A). The cofinality theorem 
relates the $K$-theory of the cofinal subcategory to the $K$-theory of the exact category containing it.
Throughout this section $\mc{M} \hookrightarrow \mc{N}$ is the inclusion of a cofinal exact subcategory 
of an exact category $\mc{N}$.\\

Define an equivalence relation on the objects of $\mc{N}$ by declaring $N_{1} \sim N_{2}$ if there exist objects
$M_{1}$ and $M_{2}$ of $\mc{M}$ such that 
\[ N_{1} \oplus M_{1} \cong N_{2} \oplus M_{2}. \]
Since $\langle M \rangle = 0$ for every $M$ in $\mc{M}$, the cofinality
of $\mc{M}$ in $\mc{N}$ ensures that equivalence classes of $\sim$ form a group under the natural operation
$\langle N_{1} \rangle + \langle N_{2} \rangle = \langle N_{1} \oplus N_{2} \rangle$; 
we denote this group by $K_{0}(\mc{N} \ \rm{rel.} \ \mc{M} )$. The following lemma and its corollary 
were first observed in the proof of Theorem 1.1 in \cite{Gray3}.

\begin{lem}
\label{K0 inj}
 The sequence:
 \[
 \begin{array}{rcccccccl}
  0 \longrightarrow & K_{0}\mc{M} & \longrightarrow & K_{0}\mc{N} & \longrightarrow & 
  K_{0}(\mc{N} \ \rm{rel.} \ \mc{M} ) & \longrightarrow & 0 \\
  & & & [N] & \longmapsto & \langle N \rangle 
 \end{array}
 \]
 is well-defined and exact. \qed
\end{lem}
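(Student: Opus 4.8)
The two maps in question are the homomorphism induced by the exact inclusion $\mc{M}\hookrightarrow\mc{N}$ on the left and the assignment $[N]\mapsto\langle N\rangle$ on the right, and my plan is to verify, in order, that the right-hand map is a well-defined homomorphism, that it is surjective, that the image of $K_{0}\mc{M}$ equals its kernel, and finally that $K_{0}\mc{M}\to K_{0}\mc{N}$ is injective. For well-definedness: since $K_{0}\mc{N}$ is generated by the classes $[N]$ subject only to the short-exact-sequence relations, it is enough to check that $\langle N\rangle=\langle N'\rangle+\langle N''\rangle$ whenever $\exact{N'}{N}{N''}$ is exact in $\mc{N}$. This is the one place I would use that $\mc{M}$ is closed under extensions. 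By cofinality, choose $\bar{N'},\bar{N''}$ with $N'\oplus\bar{N'}\cong L'$ and $N''\oplus\bar{N''}\cong L''$ for some $L',L''$ in $\mc{M}$. Taking the direct sum of the given sequence with $\bar{N'}\stackrel{=}{\rightarrowtail}\bar{N'}\twoheadrightarrow 0$ and with $0\rightarrowtail\bar{N''}\stackrel{=}{\twoheadrightarrow}\bar{N''}$ produces a short exact sequence $\exact{L'}{N\oplus\bar{N'}\oplus\bar{N''}}{L''}$ whose outer terms lie in $\mc{M}$; since $\mc{M}$ is closed under extensions the middle term lies in $\mc{M}$ too, so $\langle N\rangle+\langle\bar{N'}\rangle+\langle\bar{N''}\rangle=0$. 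As $\langle\bar{N'}\rangle=-\langle N'\rangle$ and $\langle\bar{N''}\rangle=-\langle N''\rangle$ (because $N'\oplus\bar{N'}$ and $N''\oplus\bar{N''}$ lie in $\mc{M}$ and hence have vanishing class), the required relation follows.

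For surjectivity and exactness at $K_{0}\mc{N}$ I expect only routine arguments. Every element of $K_{0}(\mc{N} \ \rm{rel.} \ \mc{M} )$ has the form $\langle N\rangle$, since by cofinality an inverse $-\langle N_{2}\rangle$ can be written as $\langle\bar{N_{2}}\rangle$ and absorbed by a direct sum; hence $[N]\mapsto\langle N\rangle$ is onto. The composite $K_{0}\mc{M}\to K_{0}\mc{N}\to K_{0}(\mc{N} \ \rm{rel.} \ \mc{M} )$ is zero because $\langle M\rangle=0$ for $M$ in $\mc{M}$. Conversely, writing an element of the kernel as $[N_{1}]-[N_{2}]$, the hypothesis $\langle N_{1}\rangle=\langle N_{2}\rangle$ furnishes an isomorphism $N_{1}\oplus M_{1}\cong N_{2}\oplus M_{2}$ with $M_{i}$ in $\mc{M}$, so that $[N_{1}]-[N_{2}]=[M_{2}]-[M_{1}]$ lies in the image of $K_{0}\mc{M}$.

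The \emph{main obstacle} is injectivity: an equality $[M_{1}]=[M_{2}]$ in $K_{0}\mc{N}$ must be pushed back to $K_{0}\mc{M}$, yet the relations defining $K_{0}\mc{N}$ involve short exact sequences whose terms need not lie in $\mc{M}$. Here I would invoke the standard description of equality in the Grothendieck group of an exact category: $[M_{1}]=[M_{2}]$ holds exactly when there is an isomorphism $M_{1}\oplus P\cong M_{2}\oplus P'$ in $\mc{N}$, where $P$ and $P'$ are direct sums built from one common finite family of short exact sequences $\exact{X}{Y}{Z}$, the middle term $Y$ occurring in one of $P,P'$ precisely when $X\oplus Z$ occurs in the other. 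For each such sequence I would, just as in the well-definedness step, adjoin a fixed correction term $\bar{X}\oplus\bar{Z}$ obtained from cofinality, so that both $Y\oplus\bar{X}\oplus\bar{Z}$ and $(X\oplus Z)\oplus\bar{X}\oplus\bar{Z}$ land in $\mc{M}$ and, by the same extension computation, acquire equal classes in $K_{0}\mc{M}$. Collecting these corrections into a single object $D$, both $P\oplus D$ and $P'\oplus D$ lie in $\mc{M}$ with $[P\oplus D]=[P'\oplus D]$ in $K_{0}\mc{M}$; adding $D$ to the isomorphism $M_{1}\oplus P\cong M_{2}\oplus P'$ then gives an isomorphism between objects of $\mc{M}$ (using that $\mc{M}$ is full), whence $[M_{1}]+[P\oplus D]=[M_{2}]+[P'\oplus D]$ in $K_{0}\mc{M}$ and cancellation yields $[M_{1}]=[M_{2}]$. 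I expect this transfer of the auxiliary objects from $\mc{N}$ back into $\mc{M}$, without disturbing the relevant classes, to be the crux of the argument, with the compatible and repeated use of cofinality together with closure under extensions being exactly what makes it succeed.
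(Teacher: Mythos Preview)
Your proposal is correct. Note, however, that the paper does not actually prove this lemma: it is stated with a terminal \qed and attributed to the proof of Theorem~1.1 in \cite{Gray3}, so there is no argument in the paper to compare against. What you have written is a complete and self-contained proof filling in the details the paper defers to the literature. Your treatment of well-definedness (pushing the outer terms into $\mc{M}$ via cofinality and then invoking closure under extensions) is the standard device, and your handling of injectivity---using the Heller-type characterisation of equality in $K_{0}$ of an exact category and then uniformly adjoining the correction terms $\bar X\oplus\bar Z$ to transport the auxiliary objects $P,P'$ back into $\mc{M}$ while preserving their $K_{0}\mc{M}$-classes---is exactly the right idea and is carried out carefully. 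The only point worth making explicit when you write it up is that a short exact sequence in $\mc{N}$ all of whose terms lie in $\mc{M}$ is, by the definition of the inherited exact structure on an extension-closed full subcategory, already a short exact sequence in $\mc{M}$; this is what justifies the equality $[Y\oplus\bar X\oplus\bar Z]=[(X\oplus Z)\oplus\bar X\oplus\bar Z]$ in $K_{0}\mc{M}$ rather than merely in $K_{0}\mc{N}$.
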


\begin{cor}
 \label{cofinal pairs}
 For any pair of objects $N_{1}$, $N_{2}$ of $\mc{N}$ with the same class in 
 $K_{0}\mc{N} / K_{0}\mc{M}$ there exists a (single) object 
 $N'$ in $\mc{N}$ such that each $N_{i} \oplus N'$ is in $\mc{M}$.
\end{cor}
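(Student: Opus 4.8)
The plan is to read the corollary off directly from the exact sequence of Lemma\autoref{K0 inj} together with the definition of the equivalence relation $\sim$. That sequence identifies the quotient $K_{0}\mc{N} / K_{0}\mc{M}$ with $K_{0}(\mc{N} \ \rm{rel.} \ \mc{M})$ via $[N] \mapsto \langle N \rangle$, so the hypothesis that $N_{1}$ and $N_{2}$ have the same class in $K_{0}\mc{N} / K_{0}\mc{M}$ translates into $\langle N_{1} \rangle = \langle N_{2} \rangle$, that is, $N_{1} \sim N_{2}$. Unwinding the definition of $\sim$, this furnishes objects $M_{1}$ and $M_{2}$ of $\mc{M}$ with $N_{1} \oplus M_{1} \cong N_{2} \oplus M_{2}$.

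Next I would invoke the cofinality of $\mc{M}$ in $\mc{N}$ to choose an object $N_{2}''$ of $\mc{N}$ with $N_{2} \oplus N_{2}''$ isomorphic to an object of $\mc{M}$. The candidate for the desired single object is then $N' := M_{1} \oplus N_{2}''$. I would verify the two required memberships by elementary reassociation of direct sums. On the one hand $N_{2} \oplus N' = (N_{2} \oplus N_{2}'') \oplus M_{1}$, which lies in $\mc{M}$ since both summands do and $\mc{M}$ is closed under direct sums. On the other hand $N_{1} \oplus N' = (N_{1} \oplus M_{1}) \oplus N_{2}'' \cong (N_{2} \oplus M_{2}) \oplus N_{2}'' = (N_{2} \oplus N_{2}'') \oplus M_{2}$, which again lies in $\mc{M}$ for the same reason. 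Thus a single $N'$ does the job for both $N_{1}$ and $N_{2}$.

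There is no serious obstacle here; the content of the corollary is essentially a repackaging of Lemma\autoref{K0 inj}. The only point requiring a little care is the bookkeeping in the choice of $N'$: one must arrange the auxiliary object so that a single $N'$ simultaneously absorbs both $N_{1}$ and $N_{2}$ into $\mc{M}$, and the isomorphism $N_{1} \oplus M_{1} \cong N_{2} \oplus M_{2}$ is precisely what converts the first membership into the second. Throughout I would use only that $\mc{M}$, being an exact and hence additive subcategory, is closed under finite direct sums and under isomorphism.
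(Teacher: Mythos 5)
Your proof is correct and follows essentially the same route as the paper: both translate the hypothesis via Lemma~\autoref{K0 inj} into the relation $\langle N_{1}\rangle = \langle N_{2}\rangle$, invoke cofinality once to produce an auxiliary complement, and then assemble the single object $N'$ as a direct sum, using that $\mc{M}$ is closed under finite direct sums and isomorphism. The only (immaterial) difference is bookkeeping: the paper first forms $N_{1}\oplus P \in \mc{M}$ and unwinds $\langle N_{2}\oplus P\rangle = 0$, whereas you unwind $N_{1}\sim N_{2}$ directly and complement $N_{2}$.
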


\begin{proof}
 By the lemma, if $N_{1}$ and $N_{2}$ have the same class in $K_{0}\mc{N} / K_{0}\mc{M}$ then
 $\langle N_{1} \rangle = \langle N_{2} \rangle$. From cofinality there exists a $P$ in $\mc{N}$ such that
 $N_{1} \oplus P$ is in $\mc{M}$, so 
 $\langle 0 \rangle = \langle N_{1} \oplus P \rangle = \langle N_{2} \oplus P \rangle$. Hence there exist objects
 $P_{1}$ and $P_{2}$ of $\mc{M}$ such that each $(N_{i} \oplus P) \oplus P_{i}$ is an object of $\mc{M}$.
 Setting $N' = P \oplus P_{1} \oplus P_{2}$, each $N_{i} \oplus N'$ is an object of $\mc{M}$.
\end{proof}

We show now that cofinality of $\mc{M}$ in $\mc{N}$ passes to the associated categories of acyclic binary multicomplexes. 
We can say much more in general however.

\begin{lem}
\label{cofinal directions}
 For all $n \geq 0$, the exact subcategory $(B^{\tn{q}})^{n}\mc{M}$ is cofinal in $(B^{\tn{q}})^{n}\mc{N}$.
 More precisely, if $N$ is in $(B^{\tn{q}})^{n}\mc{N}$ and $i \in \{ 1, \dots, n \}$ is any direction then
 there exists an object $T$ in $(B^{\tn{q}})^{n}\mc{N}$ that is diagonal in direction $i$ such that 
 $N \oplus T$ is in $(B^{\tn{q}})^{n}\mc{M}$. Moreover, if $N$ is diagonal in direction 
 $j \in \{ 1, \dots, n \}$, $j \neq i$, then $T$ may be chosen to be diagonal in directions $i$ \emph{and} $j$.
\end{lem}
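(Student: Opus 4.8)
The plan is to induct on $n$. The base case $n = 0$ is precisely the hypothesis that $\mc{M}$ is cofinal in $\mc{N}$, so suppose the statement holds below $n$. Since the four commutation relations defining a binary multicomplex are symmetric in the $n$ directions, I may relabel so as to regard the chosen direction $i$ as the outermost one; that is, I view $(B^{\tn{q}})^{n}\mc{N} = B^{\tn{q}}\mc{N}'$ as the category of acyclic binary complexes, in direction $i$, of objects of $\mc{N}' := (B^{\tn{q}})^{n-1}\mc{N}$, and likewise write $\mc{M}' := (B^{\tn{q}})^{n-1}\mc{M}$. By the inductive hypothesis $\mc{M}'$ is cofinal in $\mc{N}'$, so Lemma\autoref{K0 inj} and Corollary\autoref{cofinal pairs} are available for the pair $(\mc{M}', \mc{N}')$. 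Everything then reduces to the following single-step claim: given an acyclic binary complex $N$ in $\mc{N}'$ I must produce a complex $T$, diagonal in direction $i$, such that $N \oplus T \in B^{\tn{q}}\mc{M}'$.

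Write $d, \tilde{d}$ for the two differentials of $N$ in direction $i$, with cycle objects $Z_{k}$ and $\tilde{Z}_{k}$ respectively, so that $\exact{Z_{k}}{N_{k}}{Z_{k-1}}$ and $\exact{\tilde{Z}_{k}}{N_{k}}{\tilde{Z}_{k-1}}$ are short exact sequences of $\mc{N}'$. The decisive observation is that $[Z_{k}] = [\tilde{Z}_{k}]$ already in $K_{0}\mc{N}'$: since $N$ is bounded, descending induction on the degree using the relations $[N_{l}] = [Z_{l}] + [Z_{l-1}]$ (and the identical relations for $\tilde{Z}$) expresses each $[Z_{k}]$ as one and the same alternating sum of the classes $[N_{l}]$, independent of which differential was used. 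In particular $Z_{k}$ and $\tilde{Z}_{k}$ have the same class in $K_{0}\mc{N}'/K_{0}\mc{M}'$, so Corollary\autoref{cofinal pairs} furnishes, for each $k$, a \emph{single} object $W_{k}$ of $\mc{N}'$ with both $Z_{k} \oplus W_{k}$ and $\tilde{Z}_{k} \oplus W_{k}$ in $\mc{M}'$. I then let $T$ be the direct sum over $k$ of the elementary diagonal complexes $0 \rightarrow W_{k} \stackrel{1}{\rightarrow} W_{k} \rightarrow 0$ placed in degrees $k+1$ and $k$; this $T$ is acyclic and diagonal in direction $i$, with $T_{k} = W_{k} \oplus W_{k-1}$ and with cycle object $W_{k}$ in each degree.

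It remains to check that $N \oplus T$ lands in $B^{\tn{q}}\mc{M}'$. Its cycle objects in direction $i$ are $Z_{k} \oplus W_{k}$ for the top differential and $\tilde{Z}_{k} \oplus W_{k}$ for the bottom differential, both in $\mc{M}'$ by the choice of $W_{k}$; and its terms $N_{k} \oplus W_{k} \oplus W_{k-1}$ lie in $\mc{M}'$ because they are the middle term of the short exact sequence $\exact{Z_{k}\oplus W_{k}}{N_{k}\oplus W_{k}\oplus W_{k-1}}{Z_{k-1}\oplus W_{k-1}}$ and $\mc{M}'$ is closed under extensions. (Acyclicity in the remaining directions is inherited from $N$ and from the objects $W_{k}$.) The one genuine obstacle in the argument is exactly the need to complement the two differentials of $N$ \emph{simultaneously} by a single diagonal complex; this is what forces the passage through $K_{0}$ and the use of Corollary\autoref{cofinal pairs}, rather than a naive term-by-term application of cofinality.

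Finally, for the \emph{moreover} clause suppose $N$ is diagonal in a direction $j \neq i$ (so in particular $n \geq 2$, the remaining cases being vacuous). Then $d^{j} = \tilde{d}^{j}$ on $N$, and since the direction-$j$ differentials commute with $d, \tilde{d}$, this equality restricts to the terms $N_{k}$ and to the cycle objects $Z_{k}, \tilde{Z}_{k}$, so all of these are diagonal in direction $j$. The objects of $\mc{N}'$ that are diagonal in direction $j$ form, via the full embedding $\Delta_{j}$, the category $C^{\tn{q}}(B^{\tn{q}})^{n-2}\mc{N}$; the inductive hypothesis at level $n-2$ together with the evident ordinary-complex analogue of the construction above (a single family of cycles, complemented by plain cofinality) shows that $C^{\tn{q}}(B^{\tn{q}})^{n-2}\mc{M}$ is cofinal in $C^{\tn{q}}(B^{\tn{q}})^{n-2}\mc{N}$. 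Applying Corollary\autoref{cofinal pairs} inside this subcategory of direction-$j$-diagonal objects, I may choose the $W_{k}$ above to be themselves diagonal in direction $j$; then $T$ is diagonal in direction $i$ by construction and in direction $j$ because each $W_{k}$ is, completing the induction.
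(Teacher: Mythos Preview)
Your main argument---expand along the distinguished direction $i$, observe that the two families of cycle objects $Z_{k}$ and $\tilde{Z}_{k}$ have equal classes in $K_{0}\mc{N}'$ via the alternating-sum formula, then invoke Corollary~\ref{cofinal pairs} to get a single complement $W_{k}$ and assemble the elementary complexes into $T$---is exactly the paper's argument for the non-diagonal case, and is correct as written.

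The difference is in the ``moreover'' clause. The paper handles it by \emph{switching} the direction of expansion: when $N$ is diagonal in $j$ it expands along $j$ instead of $i$, so that there is only one family of cycles $C_{k}$; the refined inductive hypothesis at level $n-1$ then directly supplies complements $T_{k}$ that are already diagonal in direction $i$, and applying $\Delta_{j}$ to the resulting ordinary complex gives $T$ diagonal in both $i$ and $j$. This bypasses $K_{0}$ and Corollary~\ref{cofinal pairs} entirely for the diagonal case. You instead keep expanding along $i$ and restrict to the full subcategory of $j$-diagonal objects, which forces you to prove the auxiliary statement that $C^{\tn{q}}(B^{\tn{q}})^{n-2}\mc{M}$ is cofinal in $C^{\tn{q}}(B^{\tn{q}})^{n-2}\mc{N}$ and then rerun the $K_{0}$ argument there (noting, as you should say explicitly, that the short exact sequences $\exact{Z_{k}}{N_{k}}{Z_{k-1}}$ already lie in the $j$-diagonal subcategory, so the alternating-sum identity holds in its $K_{0}$). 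Your route is correct and has the virtue of uniformity---one construction handles both cases---but the paper's switch of expansion direction is shorter and makes transparent why the diagonal case is easier rather than harder.
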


\begin{proof}
 (Part of the following proof is adapted from the proof of Lemma 6.2 in \cite{Gray1}.) 
 We proceed by induction on $n$. 
 The statements for the base case $n = 0$ mean only that $\mc{M}$ is cofinal in $\mc{N}$, which is assumed 
 throughout this section.\\
 For the inductive step we fix $i \in \{ 1 , \dots , n + 1\}$ and $N$ in $(B^{\tn{q}})^{n+1}\mc{N}$. We first assume
 that $N$ is diagonal in direction $j \neq i$, and ``expand along $j$'' to consider $N$ as a diagonal acyclic binary 
 complex of objects of $(B^{\tn{q}})^{n}\mc{N}$. Let $C_{k}$ in $(B^{\tn{q}})^{n}\mc{N}$ be the image of
 $d_{k}^{j} = \tilde{d}_{k}^{j}$ (between the terms $k$ and $k-1$). By the inductive hypothesis, for each $k$ 
 there exists an object $T_{k}$ in $(B^{\tn{q}})^{n}\mc{N}$ that is diagonal in direction $i$ such that 
 $C_{k} \oplus T_{k}$ is an object of $(B^{\tn{q}})^{n}\mc{M}$. Let $(T', e)$ be the acyclic chain complex
 of objects of $(B^{\tn{q}})^{n}\mc{N}$ given by taking the direct sum of the identity maps $T_{k} \rightarrow T_{k}$ 
 concentrated in degrees $k$ and $k-1$. That is, $(T', e)$ is the complex
 \[ 
  0 \rightarrow  \cdots \rightarrow T_{k+2} \oplus T_{k+1} \rightarrow T_{k+1} \oplus T_{k}
  \rightarrow T_{k} \oplus T_{k-1} \rightarrow \cdots \rightarrow 0 
 \]
with differential 
$e = \bigl(\begin{smallmatrix}
 0&1\\ 0&0
 \end{smallmatrix} \bigr)$. Since each $T_{k}$ is diagonal in direction $i$, the complex $T'$ is diagonal in direction $i$, 
 when regarded as an object of $C^{\tn{q}}(B^{\tn{q}})^{n}\mc{N}$.
 The image of the differential $d_{k}^{j} \oplus e_{k} = \tilde{d}_{k}^{j} \oplus e_{k}$ on the acyclic complex
 $\bot_{j}(N) \oplus T'$ is equal to $C_{k} \oplus T_{k}$, and hence belongs to $(B^{\tn{q}})^{n}\mc{M}$.
 Since $(B^{\tn{q}})^{n}\mc{M}$ is closed under extensions in $(B^{\tn{q}})^{n+1}\mc{M}$, we obtain that
 the complex $\bot_{j}(N) \oplus T'$ belongs to $C^{\tn{q}}(B^{\tn{q}})^{n}\mc{M}$. We define a binary complex
 $T : = \Delta_{j}(T')$, which is an object of $B^{\tn{q}}(B^{\tn{q}})^{n}\mc{N} = (B^{\tn{q}})^{n+1}\mc{N}$ and 
 is diagonal in directions $i$ \emph{and} $j$. Then $N \oplus T = \Delta_{j}(\bot_{j}(N) \oplus T')$ belongs to 
 $B^{\tn{q}}(B^{\tn{q}})^{n}\mc{M} = (B^{\tn{q}})^{n+1}\mc{M}$.\\
 If $N$ is not diagonal in any direction different from $i$, we again consider $N$ as an acyclic binary complex
 \[ \xymatrix{
\cdots \ar@<.5ex>^{d}[r] \ar@<-.5ex>_{d'}[r]
& N_{k+1} \ar@<.5ex>^{d}[r] \ar@<-.5ex>_{d'}[r] 
& N_{k} \ar@<.5ex>^{d}[r] \ar@<-.5ex>_{d'}[r] 
& N_{k-1} \ar@<.5ex>^{d}[r] \ar@<-.5ex>_{d'}[r] 
& \cdots, 
} \]
of objects $N_{k}$ in $(B^{\tn{q}})^{n}\mc{N}$, but now ``expanded along'' direction $i$. Let $C_{k}$
and $\widetilde{C}_{k}$ in $(B^{\tn{q}})^{n}\mc{N}$ denote the images of 
the (normally different) differentials $d_{k}^{i}$ and $\tilde{d}_{k}^{i}$ 
(between $k$ and $k-1$). The classes of both $C_{k}$ and $\widetilde{C}_{k}$ are equal to 
the finite sum $\sum_{i=-\infty}^{k}(-1)^{k-i}[N_{k-i}]$ in $K_{0}(B^{\tn{q}})^{n}\mc{N}$, 
by a standard argument, so they have the same class in
\[ \operatorname{coker}(K_{0}(B^{\tn{q}})^{n}\mc{M} \rightarrow K_{0}(B^{\tn{q}})^{n}\mc{N}). \]
By the inductive hypothesis and Corollary\autoref{cofinal pairs}, there exists a single  object $T_{k}$ in 
$(B^{\tn{q}})^{n}\mc{N}$ such that $C_{k} \oplus T_{k}$ and $\tilde{C}_{k} \oplus T_{k}$ are both objects
of $(B^{\tn{q}})^{n}\mc{N}$. As above, from the objects $T_{k}$ we form the acyclic binary complex $T$ in
$B^{\tn{q}}(B^{\tn{q}})^{n}\mc{N} = (B^{\tn{q}})^{n+1}\mc{N}$, which is diagonal in direction $i$. Then
$N \oplus T$ is an object of $(B^{\tn{q}})^{n+1}\mc{M}$, as was to be shown.
\end{proof}

Following Lemmas\autoref{K0 inj} and\autoref{cofinal directions}, we now regard $K_{0}(B^{\tn{q}})^{n}\mc{M}$
as a subgroup of $K_{0}(B^{\tn{q}})^{n}\mc{N}$ for each $n$.
It is clear moreover that this inclusion respects the subgroups generated by the classes of diagonal 
binary multicomplexes, i.e. $T^{n}_{\mc{M}} \subseteq T^{n}_{\mc{N}}$. The following proposition concerning
representations of elements of $T^{n}_{\mc{N}} / T^{n}_{\mc{M}}$ is key to our proof of the cofinality theorem.

\begin{prop}
\label{diagonal represent}
 Let $x + T^{n}_{\mc{M}}$ be a class in $T^{n}_{\mc{N}} / T^{n}_{\mc{M}}$, for $n \geq 1$. Then 
 $x + T^{n}_{\mc{M}} = [t] + T^{n}_{\mc{M}}$, where $[t]$ is the  
 class in $K_{0}(B^{\textnormal{q}})^{n}\mc{N}$ of a diagonal acyclic 
 binary multicomplex $t$ in $(B^{\tn{q}})^{n}\mc{N}$.
\end{prop}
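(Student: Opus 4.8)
The plan is to show that, modulo $T^{n}_{\mc{M}}$, the classes of diagonal binary multicomplexes are already closed under the group operations, so that all of $T^{n}_{\mc{N}}/T^{n}_{\mc{M}}$ is represented by single diagonal classes. Since $x$ lies in $T^{n}_{\mc{N}}$, it is a finite sum $x = \sum_{c}\epsilon_{c}[D_{c}]$ with each $\epsilon_{c}\in\{\pm 1\}$ and each $D_{c}$ diagonal in some direction $d_{c}$. The obvious temptation is to replace $\sum_{c}[D_{c}]$ by $\bigl[\bigoplus_{c}D_{c}\bigr]$, but this fails because a direct sum of multicomplexes that are diagonal in \emph{different} directions need not be diagonal in any single direction. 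Overcoming exactly this obstruction is the heart of the argument, and it is where the cofinality of $(B^{\tn{q}})^{n}\mc{M}$ in $(B^{\tn{q}})^{n}\mc{N}$ (Lemma \autoref{cofinal directions}) does all the work.

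The basic tool I would isolate first is a \emph{negation} step: if $D$ is diagonal in direction $d_{0}$ and $i \neq d_{0}$ is any other direction, then by the ``moreover'' part of Lemma \autoref{cofinal directions} there is an object $S$, diagonal in directions $i$ \emph{and} $d_{0}$, with $D \oplus S$ in $(B^{\tn{q}})^{n}\mc{M}$. Since $D$ and $S$ are both diagonal in direction $d_{0}$, the sum $D\oplus S$ is a diagonal object of $(B^{\tn{q}})^{n}\mc{M}$, so $[D\oplus S]\in T^{n}_{\mc{M}}$; as $[D]+[S]=[D\oplus S]$, this gives $-[D]\equiv[S]\pmod{T^{n}_{\mc{M}}}$ with $S$ diagonal. (For $n=1$ one simply takes $i=d_{0}=1$ and uses the first part of the lemma.) Applying this step twice then lets me \emph{change the direction} of diagonality: feeding $D$ in with new direction $i=1$ yields $-[D]\equiv[S]$ with $S$ diagonal in direction $1$, and a second negation of $S$ (with some new direction $\neq 1$, which exists as $n\geq 2$ here) produces $-[S]\equiv[R]$ with $R$ diagonal in direction $1$; since $[D]\equiv -[S]\equiv[R]$, we conclude $[D]\equiv[R]\pmod{T^{n}_{\mc{M}}}$ with $R$ diagonal in the prescribed direction $1$.

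With these two observations I would finish as follows. Normalize every term of $x$: for a positive term $+[D_{c}]$ already diagonal in direction $1$ keep it, otherwise change its direction to $1$; for a negative term $-[D_{c}]$ apply a single negation step, choosing the new direction so that the resulting diagonal object is diagonal in direction $1$. After this, $x\equiv\sum_{c}[F_{c}]\pmod{T^{n}_{\mc{M}}}$ with \emph{every} $F_{c}$ diagonal in the common direction $1$. Now the direct sum $t:=\bigoplus_{c}F_{c}$ is genuinely diagonal in direction $1$, and $\sum_{c}[F_{c}]=[t]$, so $x+T^{n}_{\mc{M}}=[t]+T^{n}_{\mc{M}}$ as required. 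The only points demanding care are the bookkeeping of which auxiliary direction to select at each application of Lemma \autoref{cofinal directions} (so that each output is diagonal in direction $1$, and so that a distinct second direction is available), which is exactly why the hypothesis $n\geq 1$ is needed and why the degenerate case $n=1$ is handled by the first part of the lemma alone, with no direction change required.
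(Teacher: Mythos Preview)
Your argument is correct and follows essentially the same route as the paper's proof: both use Lemma~\ref{cofinal directions} (specifically its ``moreover'' clause) to trade a diagonal object in an unwanted direction for one diagonal in a fixed direction, modulo $T^{n}_{\mc{M}}$, and then collapse the resulting same-direction sum into a single diagonal class. The paper groups terms by direction and handles the residual negative part at the end (writing $x \equiv [u_{1}] - [u_{2}]$ and then absorbing $-[u_{2}]$ via one more application of the lemma), whereas you process each term individually via your ``negation'' and ``direction change'' moves; this is a purely cosmetic reorganisation of the same idea.
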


\begin{proof}
 The idea is to take the class of a general element $x \in T^{n}_{\mc{N}}$ and transform it into a direct sum of 
 diagonal complexes that are all diagonal in the same direction, without altering the class of $x$ modulo 
 $T^{n}_{\mc{M}}$. To begin, we write 
 \[ x + T^{n}_{\mc{M}} = \sum_{j=1}^{n}([t'_{j}] - [t_{j}'']) + T^{n}_{\mc{M}}, \]
 where each $t'_{j}$ and $t''_{j}$ is an actual acyclic binary multicomplex diagonal in direction $j$, and 
 pick a distinguished direction $i$. By Lemma\autoref{cofinal directions}, for each $j \neq i$ there exist
 acyclic binary multicomplexes $s'_{j}$ and $s''_{j}$ that are both diagonal in directions $i$ \emph{and} $j$ such that
 $t'_{j} \oplus s'_{j}$ and $t''_{j} \oplus s''_{j}$ are objects of $(B^{\textnormal{q}})^{n}\mc{M}$. The binary complexes
 $t'_{j}$, $s'_{j}$, $t''_{j}$ and $s''_{j}$ are all diagonal in direction $j$, so their direct sums are also diagonal in
 direction $j$, and we have
 $[t'_{j} \oplus s'_{j}] \in T^{n}_{\mc{M}}$ and $[t''_{j} \oplus s''_{j}] \in T^{n}_{\mc{M}}$. 
 Therefore $[s'_{j}] =-[t'_{j}]$ and $[s''_{j}] =-[t''_{j}]$ in $T^{n}_{\mc{N}} / T^{n}_{\mc{M}}$.
 But the $s'_{j}$ and $s''_{j}$, along with $t'_{i}$ and $t''_{i}$ are all diagonal in direction $i$,
 so taking $u_{1}$ to be the sum of the positive classes in our new expansion of $x + T^{n}_{\mc{M}}$,
 and $u_{2}$ to be the sum of the negative classes, we have
 \[ x + T^{n}_{\mc{M}} = [u_{1}] - [u_{2}] + T^{n}_{\mc{M}}, \]
 where $u_{1}$ and $u_{2}$ are acyclic binary multicomplexes that are diagonal in direction $i$.
 Finally, we use Lemma\autoref{cofinal directions} again to find an acyclic binary multicomplex
 $u'_{2}$ that is also diagonal in direction $i$, such that $[u_{2} \oplus u_{2}'] \in T^{n}_{\mc{M}}$.
 Then \[ x + T^{n}_{\mc{M}} = [u_{1}] + [u'_{2}] + T^{n}_{\mc{M}}, \]
 and setting $t = u_{1} \oplus u'_{2}$ yields the result.
\end{proof}

\begin{thm}[Cofinality]
\label{cofinality theorem}
 The inclusion functor 
 $\mc{M} \hookrightarrow \mc{N}$ induces an injection $K_{0}\mc{M} \hookrightarrow K_{0}\mc{N}$ and isomorphisms
 $K_{n}\mc{M} \cong K_{n}\mc{N}$ for $n > 0$.
\end{thm}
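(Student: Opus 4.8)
The plan is to follow the schema used for the additivity and resolution theorems: push the cofinality hypothesis up to the categories of acyclic binary multicomplexes, run the classical $K_{0}$ comparison there, and then descend to the quotients defining $K_{n}$. The case $n=0$ needs nothing new: it is exactly the injection $K_{0}\mc{M}\hookrightarrow K_{0}\mc{N}$ furnished by \autoref{K0 inj}. So I fix $n\geq 1$. By \autoref{cofinal directions} the subcategory $(B^{\tn{q}})^{n}\mc{M}$ is cofinal in $(B^{\tn{q}})^{n}\mc{N}$, so \autoref{K0 inj} applies to this pair and lets me view $K_{0}(B^{\tn{q}})^{n}\mc{M}$ as a subgroup of $K_{0}(B^{\tn{q}})^{n}\mc{N}$; moreover $T^{n}_{\mc{M}}\subseteq T^{n}_{\mc{N}}$. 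Using the presentation $K_{n}\mc{N}\cong K_{0}(B^{\tn{q}})^{n}\mc{N}/T^{n}_{\mc{N}}$ of \autoref{Grayson's def}, these inclusions induce the comparison map $\theta\colon K_{n}\mc{M}\to K_{n}\mc{N}$ that I must prove is an isomorphism. Feeding the two defining short exact sequences $T^{n}_{\bullet}\rightarrowtail K_{0}(B^{\tn{q}})^{n}\bullet\twoheadrightarrow K_{n}\bullet$ into the snake lemma reduces this to two purely arithmetic assertions about subgroups of $K_{0}(B^{\tn{q}})^{n}\mc{N}$: that $\theta$ is surjective amounts to $K_{0}(B^{\tn{q}})^{n}\mc{M}+T^{n}_{\mc{N}}=K_{0}(B^{\tn{q}})^{n}\mc{N}$, and that $\theta$ is injective amounts to $K_{0}(B^{\tn{q}})^{n}\mc{M}\cap T^{n}_{\mc{N}}=T^{n}_{\mc{M}}$.

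Surjectivity I would dispatch directly in $K_{n}\mc{N}$. Given any object $N$ of $(B^{\tn{q}})^{n}\mc{N}$, \autoref{cofinal directions} supplies a diagonal object $T$ with $N\oplus T$ in $(B^{\tn{q}})^{n}\mc{M}$; since the diagonal class $[T]$ is zero in $K_{n}\mc{N}$ we get $[N]=[N\oplus T]$, which lies in the image of $\theta$. As such classes generate $K_{n}\mc{N}$, the map $\theta$ is onto.

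Injectivity is the crux, and reduces to the inclusion $K_{0}(B^{\tn{q}})^{n}\mc{M}\cap T^{n}_{\mc{N}}\subseteq T^{n}_{\mc{M}}$ (the reverse being trivial). Here \autoref{diagonal represent} is the decisive tool: an element $x$ of the left-hand side may be replaced, modulo $T^{n}_{\mc{M}}$, by the class $[t]$ of a single acyclic binary multicomplex $t$ that is diagonal in a direction $i$ of my choosing, and since $x$ and $x-[t]$ both lie in $K_{0}(B^{\tn{q}})^{n}\mc{M}$ so does $[t]$. Everything therefore comes down to one statement: \emph{a diagonal multicomplex $t$ whose class lies in $K_{0}(B^{\tn{q}})^{n}\mc{M}$ already has $[t]\in T^{n}_{\mc{M}}$.} I expect this to be the main obstacle, precisely because a cofinal subcategory need not be closed under direct summands, so $t$ cannot simply be split off an object of $\mc{M}$. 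For $n\geq 2$ I would exploit the refined form of \autoref{cofinal directions}: choosing a second direction $i'\neq i$ and using that $t$ is already diagonal in direction $i$, I obtain a complement $U$ diagonal in both directions $i$ and $i'$ with $t\oplus U$ in $(B^{\tn{q}})^{n}\mc{M}$, whence $[t\oplus U]\in T^{n}_{\mc{M}}$ and the problem becomes $[U]\in T^{n}_{\mc{M}}$ for $U$ now diagonal in two directions; expanding $U$ along $i'$ into an acyclic complex of objects of $(B^{\tn{q}})^{n-1}\mc{N}$ that are themselves diagonal in direction $i$, I would induct on $n$, transferring the containment hypothesis to the cycles of this complex via \autoref{cofinal pairs}. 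The base case $n=1$, where no second direction exists, is where the real difficulty concentrates: there $t=\Delta(C)$ for an acyclic complex $C$ in $\mc{N}$, and I would analyse $[t]$ through the defining short exact sequences $Z_{k}\rightarrowtail C_{k}\twoheadrightarrow Z_{k-1}$ of its cycles, applying cofinality of $\mc{M}$ in $\mc{N}$ to the $Z_{k}$ in order to rewrite $[t]$, modulo $T^{1}_{\mc{M}}$, as a combination of diagonal complexes that genuinely lie in $(B^{\tn{q}})^{1}\mc{M}$. The delicate point throughout is to ensure that the correction terms produced at each stage are themselves absorbed by $T^{n}_{\mc{M}}$ rather than merely reproducing the original difficulty one dimension lower. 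Once this is in hand the intersection formula follows, $\theta$ is injective, and combined with surjectivity it yields $K_{n}\mc{M}\cong K_{n}\mc{N}$ for all $n\geq 1$.
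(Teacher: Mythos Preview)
Your overall architecture matches the paper exactly: the snake-lemma reduction to an isomorphism on cokernels, the surjectivity argument via \autoref{cofinal directions}, and the use of \autoref{diagonal represent} to reduce injectivity to the single claim that a diagonal multicomplex $t$ (say diagonal in direction $i$) with $[t]\in K_{0}(B^{\tn{q}})^{n}\mc{M}$ already satisfies $[t]\in T^{n}_{\mc{M}}$. Up to this point you are doing precisely what the paper does.

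The gap is in how you propose to prove that key claim. Your plan is an induction on $n$ using a second diagonal direction for $n\geq 2$ and a separate cycle-by-cycle analysis for the base case $n=1$; you correctly flag that the correction terms may simply reproduce the problem one dimension down, and you do not actually carry the base case through. The paper avoids this entirely with a one-line functorial trick that works uniformly for all $n\geq 1$. Since $[t]\in K_{0}(B^{\tn{q}})^{n}\mc{M}$, \autoref{K0 inj} gives $\langle t\rangle = 0$, so there exist objects $a_{1},a_{2}$ of $(B^{\tn{q}})^{n}\mc{M}$ with $t\oplus a_{1}\cong a_{2}$. Now apply the exact endofunctor $\Delta_{i}\top_{i}$ of $(B^{\tn{q}})^{n}\mc{N}$, which replaces the bottom differential in direction $i$ by a copy of the top one. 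This functor fixes $t$ (already diagonal in direction $i$), preserves $(B^{\tn{q}})^{n}\mc{M}$, and lands in diagonal objects; hence
\[
[t]=K_{0}(\Delta_{i}\top_{i})([t])=[\Delta_{i}\top_{i}(a_{2})]-[\Delta_{i}\top_{i}(a_{1})]\in T^{n}_{\mc{M}}.
\]
No induction, no base case, no bookkeeping of cycles: the missing idea is simply to diagonalise the $\mc{M}$-witnesses $a_{1},a_{2}$ rather than to further manipulate $t$.
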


\begin{proof}
 The case $n = 0$ is part of Lemma\autoref{K0 inj}, so we proceed directly to $n > 0$.
 We have the following diagram of abelian groups, whose rows are exact.
 \[
  \xymatrix{
  0 \ar[r] & T^{n}_{\mc{M}} \ar[r] \ar@{^(->}[d] & K_{0}(B^{\tn{q}})^{n}\mc{M} \ar[r] \ar@{^(->}[d]
  & K_{n}\mc{M} \ar[r] \ar[d] & 0 \\
  0 \ar[r]& T^{n}_{\mc{N}} \ar[r] \ar@{->>}[d] & K_{0}(B^{\tn{q}})^{n}\mc{N} \ar[r] \ar@{->>}[d] 
  & K_{n}\mc{N} \ar[r] & 0 \\
  & T^{n}_{\mc{N}} / T^{n}_{\mc{M}} \ar[r] & K_{0}(B^{\tn{q}})^{n}\mc{N} / K_{0}(B^{\tn{q}})^{n}\mc{M} & &
  }
 \]
 The snake lemma implies that the homomorphism $K_{n}\mc{M} \rightarrow K_{n}\mc{N}$ is an isomorphism
 if and only if the induced homomorphism
 \[
 \begin{array}{rcl}
  T^{n}_{\mc{N}} / T^{n}_{\mc{M}} & \rightarrow & K_{0}(B^{\tn{q}})^{n}\mc{N} / K_{0}(B^{\tn{q}})^{n}\mc{M} \\
  x + T^{n}_{\mc{M}} & \mapsto & x + K_{0}(B^{\tn{q}})^{n}\mc{M}
 \end{array}                              
 \]
 is an isomorphism. Denote this homomorphism by $\psi$. We first show that $\psi$ is surjective. Let 
 $b$ be a generic element of $K_{0}(B^{\tn{q}})^{n}\mc{N}$, so $b = [b_{1}] - [b_{2}]$, where 
 $b_{1}$ and $b_{2}$ are acyclic binary multicomplexes of dimension $n$. By
 Lemma\autoref{cofinal directions} there exist diagonal acyclic binary multicomplexes $s_{1}$ and $s_{2}$ 
 such that $b_{i} \oplus s_{i}$ is an object of $(B^{\tn{q}})^{n}\mc{M}$ for $i = 1, 2$. Then 
 $[b_{1} \oplus s_{1}] - [b_{2} \oplus s_{2}] \in K_{0}(B^{\tn{q}})^{n}\mc{M}$, and is therefore zero in
 $K_{0}(B^{\tn{q}})^{n}\mc{N} / K_{0}(B^{\tn{q}})^{n}\mc{M}$. 
 Set $s = [s_{1}] - [s_{2}] \in T^{n}_{\mc{N}}$. Then $b + K_{0}(B^{\tn{q}})^{n}\mc{N}$
 is the image of $-s + T^{n}_{\mc{M}}$ under $\psi$, so $\psi$ is surjective.\\
 For the injectivity of $\psi$, suppose that $x \in T^{n}_{\mc{N}}$ such that $x + T^{n}_{\mc{M}}$ is in
 $\operatorname{ker}(\psi)$. By Proposition\autoref{diagonal represent} we may write 
 $x + T^{n}_{\mc{M}} = [t] + T^{n}_{\mc{M}}$ for an actual acyclic binary multicomplex $t$ diagonal 
 in some direction $i$. Since $[t] + T^{n}_{\mc{M}}$ is in the kernel of $\psi$, we must have
 $[t] \in K_{0}(B^{\tn{q}})^{n}\mc{M}$ (considered as a subgroup of $K_{0}(B^{\tn{q}})^{n}\mc{N}$). 
 In the notation of Lemma\autoref{K0 inj}, we have $\langle t \rangle = 0$ in 
 \[ K_{0}(B^{\textnormal{q}})^{n}\mc{N} / K_{0}(B^{\textnormal{q}})^{n}\mc{M} \cong 
 K_{0}((B^{\textnormal{q}})^{n}\mc{N} \ \text{rel.} \ (B^{\textnormal{q}})^{n}\mc{M}, \] 
 so there exist acyclic binary multicomplexes $a_{1}$ and $a_{2}$ in $(B^{\textnormal{q}})^{n}\mc{M}$
 such that $t \oplus a_{1} \cong a_{2}$. Consider the composite exact functor 
 $\Delta_{i}\top_{i} : (B^{\textnormal{q}})^{n}\mc{N} \rightarrow (B^{\textnormal{q}})^{n}\mc{N}$,
 that replaces the bottom differential in direction $i$ of an acyclic binary multicomplex
 with a second copy of the top differential. The binary multicomplexes $\Delta_{i}\top_{i}(a_{1})$
 and $\Delta_{i}\top_{i}(a_{2})$ are diagonal in direction $i$, and $\Delta_{i}\top_{i}(t) = t$, 
 since $t$ is already diagonal in direction $i$. Applying the induced homomorphism on $K_{0}$
 we have
 \[ [t]= K_{0} ( \Delta^{i} \top_{i})([t]) = K_{0} (\Delta_{i} \top_{i})([a_{1}] -[a_{2}]) 
 = [\Delta_{i} \top_{i}(a_{1})]- [\Delta_{i} \top_{i}(a_{2})]
  \in T^{n}_{\mc{M}}.
 \]
 Hence $x \in T^{n}_{\mc{M}}$ for any $x$ such that $x + T^{n}_{\mc{M}}$ is in the kernel of $\psi$, so the kernel
 of $\psi$ is trivial and $\psi$ is injective.
\end{proof}

\bibliographystyle{amsalpha}
\bibliography{biblio.bib}

\end{document}